\documentclass[11pt,reqno]{amsart} 

\usepackage[utf8]{inputenc}
\usepackage[a4paper]{geometry}

\usepackage{amsmath}
\usepackage{amsfonts}
\usepackage{amssymb}
\usepackage{amsthm}
\usepackage{paralist}

\usepackage{natbib}
\usepackage{color}
\usepackage[colorlinks=true,linkcolor=blue,citecolor=blue,pdfborder={0 0 0}]{hyperref}

\usepackage{dsfont}
\usepackage{bm}

\usepackage{graphicx}
\usepackage{enumerate}

\newtheorem{theorem}{Theorem}[section]
\newtheorem{proposition}[theorem]{Proposition}
\newtheorem{lemma}[theorem]{Lemma}

\theoremstyle{definition}
\newtheorem{definition}[theorem]{Definition}

\theoremstyle{remark}

\numberwithin{equation}{section}

\newcommand{\R}{\mathbb{R}}

\newcommand{\N}{\mathbb{N}}
\newcommand\Prob{\mathbb{P}}    

\newcommand{\scs}{\scriptscriptstyle}

\newcommand{\eps}{\varepsilon}

\newcommand{\ind}{\operatorname{\bf{1}}}
\newcommand{\diff}{{\,\mathrm{d}}}

\newcommand{\floor}[1]{\lfloor{#1}\rfloor}

\newcommand{\BM}{{b}}
\newcommand{\POT}{{p}}
\newcommand{\M}{{m}}

\newcommand{\RV}{{\operatorname{RV}}}

\newcommand{\bu}{\bm{u}}

\newcommand{\Exp}{\operatorname{E}}

\newcommand{\MSE}{\operatorname{MSE}}

%

%

\parskip 2pt

\allowdisplaybreaks[4]

\begin{document}

\title[Multivariate Second Order Conditions]{On Second Order Conditions in the Multivariate Block Maxima and Peak over Threshold Method}

\author{Axel B\"ucher}
\address{Heinrich-Heine-Universität D\"usseldorf,
Mathematisches Institut,
Universit\"atsstr.~1, 40225 D\"usseldorf, Germany.}
\email{axel.buecher@hhu.de}

\author{Stanislav Volgushev}
\address{Department of Statistical Sciences, 
University of Toronto, 100 St. George St. 
Toronto, M5S 3G3 
Ontario, Canada}
\email{stanislav.volgushev@utoronto.ca}

\author{Nan Zou}
\address{Department of Statistical Sciences, 
University of Toronto, 100 St. George St. 
Toronto, M5S 3G3 
Ontario, Canada}
\email{nan.zou@utoronto.ca}

\date{\today}

\begin{abstract}
Second order conditions provide a natural framework for establishing
asymptotic results about estimators for tail related quantities. Such conditions are typically tailored to the estimation principle at hand, and may be vastly different for
estimators based on the block maxima (BM) method or the peak-over-threshold (POT) approach.
In this paper we provide details on the relationship between typical second
order conditions for BM and POT methods in the multivariate case.
We show that the two conditions typically imply each other, but
with a possibly different second order parameter. The latter implies that,
depending on the data generating process, one of the two methods can attain faster convergence rates than the other. The class of multivariate Archimax copulas is examined in detail;
we find that this class contains models for which the second order parameter is smaller for the BM method and vice versa. The theory is illustrated by a small simulation study. 
\medskip

\noindent \textit{Key words:} Domain of attraction, Archimax copulas, Pickands dependence function, Extreme value statistics, Madogram, Extremal dependence.
\end{abstract}

\maketitle

\section{Introduction}

Extreme value theory is concerned with describing the tail behavior of a possibly multivariate distribution. Respective statistical models and methods find important applications in fields like finance, insurance, environmental sciences, hydrology or meteorology. 
In the multivariate case, a key part of statistical inference is estimation of the dependence structure. Mathematically, the dependence structure can be described in various equivalent ways (see, e.g., \citealp{Res87, BeiGoeSegTeu04, DehFer06}): by the stable tail dependence function $L$ \citep{Hua92}, by the exponent measure $\mu$ \citep{BalRes77},  by the Pickands dependence function $A$ \citep{Pic81}, by the tail copula $\Lambda$ \citep{SchSta06}, by the spectral measure $\Phi$ \citep{DehRes77}, by the madogram $\nu$ \citep{NavGuiCooDie09}, or by other less popular objects. 

Estimators for these objects typically rely on one of two basic principles allowing one to move into the tail of the distribution: the \textit{block maxima method} (BM) and the \textit{peak-over-threshold approach} (POT). More precisely, suppose that $\bm X_1, \dots, \bm X_n$, with $\bm X_i=(X_{i,1}, \dots, X_{i,d})'$, is an i.i.d.\ sample from a multivariate cumulative distribution function $F$. For some large number $k$ (in the asymptotics, one commonly considers $k=k_n\to\infty$ such that $k=o(n)$), let 
\[
\mathcal X_\POT = \{ \bm X_i \mid
\text{rank}(X_{i,j} \text{ among } X_{1,j}, \dots, X_{n,j}) \ge n-k  \text{ for some } j=1, \dots, d\},
\]
that is, $\mathcal X_\POT$ comprises all observations for which at least one coordinate is large. Any estimator defined in terms of these observations represents the multivariate POT method. The vanilla nonparametric estimator within this class is probably the \textit{empirical stable tail dependence function} \citep{Hua92}. 

To introduce the BM approach, let $1\le r \le n$ denote a large block size, and let $k=\lfloor n/r \rfloor$ denote the number of blocks (again, in the asymptotics, one commonly considers $k=k_n\to\infty$ such that $k=o(n)$). For $\ell=1, \dots, k$, let $\bm M_{\ell,r} =(M_{\ell, 1,r}, \dots, M_{\ell,1,r})'$ denote the vector of componentwise block-maxima in the $\ell$th block of observations of size $r$, that is, $M_{\ell, j,r}=\max(X_{i,j}: (\ell-1)r+1 \le i \le  \ell r\} )$. Any estimator defined in terms of the sample 
\[
\mathcal X_{\BM} = (\bm M_{1,r}, \dots, \bm M_{k,r})
\] 
represents the BM approach. 

Asymptotic theory for estimators based on the POT approach is typically formulated under a suitable second order condition (see Section~\ref{sec:secor} below for details). The asymptotic variance of resulting estimators is then typically of the order $k_n^{\scs -1}$ (see, e.g., \citealp{Hua92, EinKraSeg12, EinSeg09, SchSta06,FouDehMer15}, among others), whereas the rate of the bias is given by $(n/k_n)^{\rho_\POT}$, with $\rho_\POT <0$ denoting the \textit{second order parameter} in the aforementioned second order condition. Balancing the bias and variance leads to the choice $k_n\asymp n^{-2\rho_\POT/(1-2\rho_\POT)}$, which results in an asymptotic MSE of order $n^{2\rho_\POT/(1-2\rho_\POT)}$. For a particular class of models, this resulting convergence rate is in fact minimax-optimal \citep{DreHua98}.

Perhaps surprisingly, results on asymptotic theory for estimators based on the BM approach are typically based on the assumption that the block size $r$ is fixed and that the sample $\mathcal X_{\BM}$ is a genuine i.i.d. sample from the limiting attractor distribution (see, e.g., \citealp{GenSeg09} and references therein). Thereby, a potential bias is completely ignored and a fair comparison between estimators based on the POT and the BM approach is not feasible. This imbalance has recently been recognized by \cite{Dom15,FerDeh15,BucSeg18,DomFer17} in the univariate case; see also the overview article  \cite{BucZho18}. To the best of our knowledge, the only reference in the multivariate case is \cite{BucSeg14}. {In analogy to the POT case, the results in the latter paper can be simply reformulated in terms of a suitable second order condition (see Section~\ref{sec:secor} below for details). Based on these results, an estimator for the Pickands dependence function can then shown to have asymptotic variance of order $k_n^{\scs -1}=(n/r_n)^{-1}$, while the bias is again typically governed by a second order parameter $\rho_{\BM}<0$ and has order $r_n^{\rho_{\BM}}$. Similar calculations as in the preceding paragraph show that the best possible MSE is of order $n^{2\rho_\BM/(1-2\rho_\BM)}$. }

As indicated by the above discussion, ``best'' convergence rates for the BM and POT approaches depend on the second order parameters in their respective second order conditions. This motivates to study the relationship between the two types of second order conditions. Our first major contribution is to show that {a natural} POT second order condition, in case $\rho_\POT \in {(-1,0]}$, implies {a natural} BM second order condition with $\rho_\BM = \rho_\POT$, and vice versa. As a consequence, if ${\rho_\BM =\rho_\POT \in (-1,0)}$, the best attainable rates for POT and BM estimators coincides. The situation changes when $\rho_\POT < -1$, in which case we obtain that under mild additional conditions $\rho_\BM = \max(\rho_\POT, -1)$; similarly we prove that typically $\rho_\BM < -1$ implies $\rho_\POT = \max(\rho_\BM, -1)$. This identifies scenarios in which either BM or POT estimators can attain better rates of convergence. Finally, when $\rho_\POT = -1$ both $\rho_\BM = -1$ and $\rho_\BM < -1$ is possible (and vice versa), and additional conditions to verify which of the two cases occurs are provided. 
Note that a similar relationship between second order parameters in the univariate case has been worked out in \cite{DreDehLi03}. 

As a second major contribution, we provide a detailed analysis of second order conditions (BM and POT) for the class of Archimax copulas \citep{ChaFouGen14}. Simple sufficient conditions are formulated in terms of the Archimedean generator associated with such copulas. In particular, we show that the class of Archimax copulas copulas contains examples where either the POT or BM method can lead to faster convergence rates. This is also illustrated in a small finite-sample simulation study.

The remaining part of this article is organized as follows: in Section~\ref{sec:secor}, we introduce the second order conditions of interest and work out the connections between the two, including the above mentioned main result.
In Section~\ref{sec:ex}, we work out details in two particular examples: the general class of Archimax copulas and outer power transforms of the Clayton copula. In Section~\ref{sec:est}, we illustrate the consequences for the rate of convergence of respective estimators, both by theoretical means and by a simulation study.


\section{Second Order Conditions for the BM and the POT approach}
\label{sec:secor}

Let $(\bm X_t)_{t\in\N}$ denote an i.i.d.\ sequence of $d$-variate random vectors $\bm X_t=(X_{t1}, \dots, X_{td})$ with joint cumulative distribution function (c.d.f.) $F$ and continuous marginal c.d.f.s $F_1, \dots, F_d$. Let $C$ denote the associated unique copula. For integer $r \in\N$ and $j=1, \dots, d$, let $M_{1:r,j} = \max_{t=1}^r X_{tj}$ denote the maximum over the first $r$ observations in the $j$th coordinate, and let $\bm M_{1:r} = (M_{1:r,1}, \dots, M_{1:r,d})$. By independence, $\bm M_{1:r}$ has joint c.d.f.\ $F_r$ and copula~$C_r$, defined as $F_r(x) = F(x)^r$ and $C_r(\bm u) = C(\bm u^{1/r})^r$, where $\bm u^{s} = (u_1^s, \dots, u_d^s)$. 

We assume that $C$ lies in the copula domain of attraction of some extreme-value copula $C_\infty$, that is
\begin{align} \label{eq:dombm}
C_\infty(\bm u) = \lim_{r\to\infty} C_r(\bm u) = \lim_{r\to\infty} C(\bm u^{1/r})^r , \qquad \bm u \in [0,1]^d.
\end{align}
Hence, {$C_\infty(\bm u^{1/s})^s=C_\infty(\bm u)$ for all $s>0$ and $\bm u \in [0,1]^d$ and}
\[
C_\infty(\bm u) = \exp\{- L(-\log u_1, \dots, -\log u_d) \}, \quad \bm u \in [0,1]^d,
\]
for some \textit{stable tail dependence function} $L:[0,\infty]^d \to [0,\infty]$ satisfying
\begin{enumerate}
\item $L$ is homogeneous: $L(s\, \cdot) = sL(\cdot)$ for all  $s>0$;
\item $L(\bm e_j)=1$ for $j=1, \dots, d$, where $\bm e_j$ denotes the $j$th unit vector;
\item $\max(x_1, \dots, x_d) \le L(\bm x) \le x_1+ \dots + x_d$ for all $\bm x\in [0,\infty)^d$;
\item $L$ is convex;
\end{enumerate}
see, e.g., \cite{BeiGoeSegTeu04}.
By Taylor expansions, the assumption in \eqref{eq:dombm} is equivalent to assuming that 
\begin{align} \label{eq:dompot}
\lim_{t \to \infty} t \{ 1- C(\bm 1- \bm x / t)\} = L(\bm x), \qquad \bm x \in [0,\infty]^d,
\end{align}
where the copula $C$ is naturally extended to a c.d.f.\ on $[-\infty,\infty]^d$.
Note that the convergence is necessarily uniform on $[0,T]^d$, for any fixed $T>0$, by Lipschitz-continuity  of $C$ and $L$. 
Consider the following natural second order conditions.

\begin{definition}[Second order conditions] \label{def:secor}
Let $C$ be a copula satisfying one of the equivalent limit relations in \eqref{eq:dombm} or \eqref{eq:dompot}.
\begin{compactenum}[$(SO)_{\BM,d} \quad $]
\renewcommand{\theenumi}{$\mathrm{(SO)_\POT}$}
\renewcommand{\labelenumi}{\theenumi}
\item\label{cond:sopot}  
Suppose there exists a positive function $\alpha_\POT:(0,\infty) \to (0,\infty)$ with $\lim_{t\to\infty } \alpha_\POT(t) = 0$ and a non-null function $S_\POT$, such that,
\begin{equation*} 
\lim_{t \to \infty}  \frac{t \{ 1- C(\bm 1- \bm x / t)\} - L(\bm x)}{\alpha_{\POT}(t)}  = S_\POT(\bm x), \qquad \bm x \in [0,\infty)^d,
\end{equation*}
uniformly on $[0,T]^d$, for any fixed $T>0$.
\renewcommand{\theenumi}{$\mathrm{(SO)_{\BM,d}}$}
\renewcommand{\labelenumi}{\theenumi}
\item \label{cond:sobmbs} 
Suppose there exists a positive sequence $\alpha_{\BM,d}: \N \to (0,\infty)$ with $\lim_{r\to\infty } \alpha_{\BM,d}(\floor{r}) = 0$ and a non-null function $S_{\BM,d}$, such that,
\[
\lim_{r \to \infty}  \frac{C_{\floor{r}}(\bm u) - C_\infty(\bm u)}{\alpha_{\BM,d}(\floor{r})}  = S_{\BM,d}(\bm u), \qquad \bm u \in [0,1]^d,
\]
uniformly on $[\delta,1]^d$ for each $\delta>0$.
\renewcommand{\theenumi}{$\mathrm{(SO)_\BM}$}
\renewcommand{\labelenumi}{\theenumi}
\item \label{cond:sobm} 
Suppose there exists a positive function $\alpha_\BM:(0,\infty) \to (0,\infty)$ with $\lim_{r\to\infty } \alpha_\BM(r) = 0$ and a non-null function $S_\BM$, such that,
\begin{equation*} 
\lim_{r \to \infty}  \frac{C(\bm u^{1/r})^r  - C_\infty(\bm u)}{\alpha_{\BM}(r)}  = S_\BM(\bm u), \qquad \bm u \in [0,1]^d,
\end{equation*}
uniformly on $[\delta,1]^d$ for each $\delta>0$.
\end{compactenum}
\end{definition}

{Condition~\ref{cond:sobmbs}, with the additional requirement that the convergence be uniform on $[0,1]^d$, can be applied to the results in \cite{BucSeg14} to obtain an explicit rate of the bias term for the empirical copula of block maxima (see also Section~\ref{sec:est} below for details).} We will show {in Section~\ref{subsec:prop}} below that Condition~\ref{cond:sobmbs} is actually equivalent to {the seemingly stronger Condition~\ref{cond:sobm}} (with $S_\BM=S_{\BM,d}$, Lemma~\ref{lem:sobdtosob}) and that further the convergence in \ref{cond:sobm} must in fact be uniform on $[0,1]^d$ {(Lemma~\ref{lem:unif})}.
Finally, note that Condition~\ref{cond:sopot} was imposed in \cite{FouDehMer15}, among others.

\subsection{Some simple properties of the second order conditions}
\label{subsec:prop}

{The auxiliary functions $\alpha_m$, $m\in \{\BM, \POT\}$, in the second order conditions are necessarily regularly varying and imply a homogeneity property of the limit function $S_m$. See also \cite{FouDehMer15} for part (i) of the following lemma.}

\begin{lemma} \label{lem:reg}
(i)
Suppose that \ref{cond:sopot} is met. Then there exists $\rho_\POT \le 0$ such that $\alpha_\POT$ is regularly varying of order $\rho_\POT$.  As a consequence, $S_\POT$ is homogeneous of order $1-\rho_\POT$, that is,
\[
S_\POT(s\bm x) = s^{1-\rho_\POT}S_\POT(\bm x)
\] 
for all $s>0, \bm x \in [0,\infty)^d$.  \smallskip

\noindent
(ii) Suppose that \ref{cond:sobm} is met. Then there exists $\rho_\BM \le 0$ such that $\alpha_\BM$ is regularly varying of order $\rho_\BM$.  As a consequence,  
\[
\frac{S_\BM(\bm u^s)}{C_\infty(\bm u^s)} = s^{1-\rho_\BM} \frac{S_\BM(\bm u)}{C_\infty(\bm u)}
\]
for all $s>0, \bm u\in(0,1]^d$. 
\end{lemma}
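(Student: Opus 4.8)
The plan is to handle both parts by the same mechanism: each second order condition possesses an exact \emph{scaling identity}, coming from the homogeneity of $L$ in the POT case and from max-stability in the BM case, which—once inserted into the condition—forces a ratio limit of the auxiliary function to exist for every $s>0$. This yields regular variation of the auxiliary function via the standard characterization theorem, and the homogeneity of the limit function then falls out as a by-product. First I would set up the algebraic identities, then run the ratio argument, and finally read off the order and the homogeneity relation.

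For part (i), write $g_u(\bm x) = u\{1 - C(\bm 1 - \bm x/u)\}$, so that \ref{cond:sopot} reads $\{g_u(\bm x) - L(\bm x)\}/\alpha_\POT(u) \to S_\POT(\bm x)$. A direct substitution gives the scaling identity
\[
g_{su}(s\bm x) = s\, g_u(\bm x), \qquad s>0,
\]
and, together with $L(s\bm x)=sL(\bm x)$, this yields $g_{su}(s\bm x) - L(s\bm x) = s\{g_u(\bm x) - L(\bm x)\}$. Dividing by $\alpha_\POT(su)$ and writing the right-hand side as $s\cdot\frac{g_u(\bm x) - L(\bm x)}{\alpha_\POT(u)}\cdot\frac{\alpha_\POT(u)}{\alpha_\POT(su)}$, the left-hand side tends to $S_\POT(s\bm x)$ while the first factor tends to $S_\POT(\bm x)$. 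Fixing $\bm x_0$ with $S_\POT(\bm x_0)\neq 0$ (possible since $S_\POT$ is non-null) forces $\psi(s):=\lim_{u\to\infty}\alpha_\POT(u)/\alpha_\POT(su)$ to exist and be finite. A short telescoping argument shows $\psi$ is multiplicative, $\psi(ss')=\psi(s)\psi(s')$ with $\psi(1)=1$, whence $\psi$ is everywhere positive; since $\alpha_\POT$ may be taken measurable, the characterization theorem for regularly varying functions \citep{DehFer06} gives $\psi(s)=s^{-\rho_\POT}$ for some $\rho_\POT\in\R$, i.e.\ $\alpha_\POT$ is regularly varying of order $\rho_\POT$. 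As $\alpha_\POT(t)\to 0$ rules out a positive order, $\rho_\POT\le 0$. Feeding $\psi(s)=s^{-\rho_\POT}$ back into the limit (now for arbitrary $\bm x$, since the scaling identity is general) yields $S_\POT(s\bm x)=s^{1-\rho_\POT}S_\POT(\bm x)$.

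For part (ii), set $G_r(\bm u)=C(\bm u^{1/r})^r$. Here the relevant exact identity is multiplicative, namely $G_{sr}(\bm u^s)=G_r(\bm u)^s$, while max-stability of $C_\infty$ (stated below \eqref{eq:dombm}) gives $C_\infty(\bm u^s)=C_\infty(\bm u)^s$. Hence $G_{sr}(\bm u^s)-C_\infty(\bm u^s)=G_r(\bm u)^s-C_\infty(\bm u)^s$, and the only new ingredient relative to (i) is to linearize the right-hand side. Applying the mean value theorem to $x\mapsto x^s$ between $G_r(\bm u)$ and $C_\infty(\bm u)$, and using that on $[\delta,1]^d$ the base $C_\infty(\bm u)$ is bounded away from $0$ so that the intermediate point converges to $C_\infty(\bm u)$, gives
\[
G_r(\bm u)^s-C_\infty(\bm u)^s = s\,C_\infty(\bm u)^{s-1}\{G_r(\bm u)-C_\infty(\bm u)\}\{1+o(1)\}.
\]
Dividing by $\alpha_\BM(sr)$ and repeating the ratio argument of part (i) (choosing an interior non-null point $\bm u_0$) shows that $\alpha_\BM$ is regularly varying of some order $\rho_\BM\le 0$ and that $S_\BM(\bm u^s)=s^{1-\rho_\BM}C_\infty(\bm u)^{s-1}S_\BM(\bm u)$. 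Dividing both sides by $C_\infty(\bm u^s)=C_\infty(\bm u)^s$ yields exactly the asserted quotient identity.

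I expect the only genuinely delicate points to be the passage from the pointwise ratio limit to regular variation and the control of the mean value linearization. For the former, the multiplicativity of $\psi$ (hence its positivity and finiteness) must be verified with care, and the power-law conclusion relies on measurability of the auxiliary function together with the standard characterization of regular variation. For the latter, one must ensure the $o(1)$ in the mean value step is genuine and uniform enough, which uses that $C_\infty$ is bounded away from $0$ on $[\delta,1]^d$; the degeneracy near coordinates equal to $0$ is precisely why the statement of (ii) is phrased on $(0,1]^d$ and through the quotient $S_\BM/C_\infty$ rather than for $S_\BM$ alone.
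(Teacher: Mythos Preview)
Your proof is correct and follows essentially the same route as the paper: both parts exploit the exact scaling identity (homogeneity of $L$ for \ref{cond:sopot}, max-stability of $C_\infty$ for \ref{cond:sobm}) to produce a ratio limit for the auxiliary function, linearize the power via the mean value theorem in part~(ii), and then appeal to the characterization theorem for regular variation. The only cosmetic difference is how positivity of the ratio limit is secured---the paper checks it for $x$ in a neighborhood of $1$ using continuity of $S$, whereas you obtain the limit for every $s>0$ and deduce positivity from the multiplicativity $\psi(ss')=\psi(s)\psi(s')$.
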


Note that the latter display also implies a growth condition on $S_\BM$ when one coordinate approaches zero: with the constant $K_\BM=e^d \sup_{\bm v \in [e^{-1},1]} |S_\BM(\bm v)|$, which is independent of $\bu$ but can depend on $S_\BM$, we have
\begin{align} \label{eq:gsb}
|S_\BM(\bm u)| \le K_\BM  u_\wedge (-\log u_\wedge)^{1+|\rho_\BM|}, \qquad u_\wedge = \min(u_1, \dots, u_d),
\end{align}
for all $\bm u \in [0,1]^d$, with  the upper bound to be interpreted as zero if $u_\wedge=0$. Indeed, for all $\bm x\ge0$ with $x_\vee = \max(x_1, \dots, x_d) \in(0,\infty)$ we have
\[
|S_\BM(e^{-\bm x})| = |S_\BM( (e^{-\bm x/{x_\vee}})^{x_\vee})|  = x_\vee^{1+|\rho_\BM|} \frac{|S_\BM(e^{-\bm x/{x_\vee}})|}{C_\infty(e^{-\bm x/{x_\vee}})} C_\infty(e^{-\bm x})
\le
K_\BM x_\vee^{1+|\rho_\BM|} e^{-x_\vee},
\]
since $e^{-\bm x/x_\vee} \in[e^{-1},1]^d$ and since $\prod_{i=1}^d u_i \le C_\infty(\bm u) \le u_\wedge$.

\begin{proof}[Proof of Lemma~\ref{lem:reg}] We only consider assertion (ii) and for notational brevity,  we omit the index $\BM$ at all instances throughout the proof. 

By Theorem B.1.3 in~\cite{DehFer06}, regular variation of $\alpha$ follows if we prove that there exists $\mathcal{X} \subset (0,\infty)$, a measurable set of positive Lebesgue measure, such that, for all $x \in \mathcal{X}$, $\alpha(rx)/{\alpha(r)}$ converges, for $r \to \infty$, to a finite, positive function of $x$. Pick a point $\bu\in (0,1)^d$ with $S(\bu) \neq 0$ and let $\mathcal{X}$ denote a neighborhood of $1$ specified below.

For $r,x>0$, we may write, by max-stability of $C_\infty$,
\begin{align*}
\frac{\alpha(rx)}{\alpha(r)} 
= 
\frac{
	\left\{ \frac{C((\bm u^{1/x})^{1/r})^{rx}  - C_\infty(\bm u^{1/x})^{x}}{\alpha(r)} \right\}
	}{
	\left\{ \frac{C(\bm u^{1/(rx)})^{rx}  - C_\infty(\bm u)}{\alpha(rx)} \right\}
},
\end{align*}
for some arbitrary point $\bm u \in (0,1]^d$ such that $S(\bm u) \ne 0$.
The denominator converges to $S(\bm u)$. By the mean-value theorem, the numerator is equal to
\[
\{ xC_\infty(\bm u)^{1-1/x}+o(1) \}  \left\{ \frac{C((\bm u^{1/x})^{1/r})^{r}  - C_\infty(\bm u^{1/x})}{\alpha(r)} \right\}, \quad r \to \infty,
\]
which converges to $xC_\infty(\bm u)^{1-1/x} S(\bm u^{1/x})$. By continuity of $S$, the latter limit is positive for all $x$ in a sufficiently small neighborhood of $1$. 

The assertion regarding $S_\BM$ follows from elementary calculations.
\end{proof}

\begin{lemma}\label{lem:sobdtosob}
If \ref{cond:sobmbs} is met, then \ref{cond:sobm} holds with $\alpha_{\BM}(r) = \alpha_{b,d}(\floor{r})$ and $S_{\BM} = S_{\BM, d}$. 
\end{lemma}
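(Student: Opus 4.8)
The plan is to verify the limit relation in~\ref{cond:sobm} directly with the proposed choices $\alpha_\BM(r) = \alpha_{\BM,d}(\floor{r})$ and $S_\BM = S_{\BM,d}$. Since $\alpha_{\BM,d}(\floor r) \to 0$ and $S_{\BM,d}$ is non-null by assumption, the only substantial task is to replace the \emph{integer} block size $\floor r$ appearing inside $C_{\floor r}(\bm u) = C(\bm u^{1/\floor r})^{\floor r}$ by the \emph{real} block size $r$, i.e.\ to control the effect of the fractional part of $r$. Throughout I may assume $r \ge 1$, so that $m := \floor r \ge 1$; I fix $\delta > 0$ and work uniformly on $[\delta,1]^d$.

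The key device is a multiplicative splitting of $r$. Write $t = t_r := r/m$, so that $t \in [1,2)$ and $t \to 1$ as $r \to \infty$. Since $\bm u^{1/r} = (\bm u^{1/t})^{1/m}$, setting $\bm w = \bm w_r := \bm u^{1/t}$ gives the identity
\[
C(\bm u^{1/r})^r = \big( C(\bm w^{1/m})^m \big)^t = \big( C_m(\bm w) \big)^t.
\]
Because $t \ge 1$ and $\delta \le 1$, we have $\bm w = \bm u^{1/t} \in [\delta,1]^d$ whenever $\bm u \in [\delta,1]^d$, so~\ref{cond:sobmbs} applies uniformly to the inner expression and yields $C_m(\bm w) = C_\infty(\bm w) + \alpha_{\BM,d}(m)\,(S_{\BM,d}(\bm w) + o(1))$, the $o(1)$ being uniform in $\bm w \in [\delta,1]^d$. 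Invoking max-stability of $C_\infty$ in the form $C_\infty(\bm u^{1/t}) = C_\infty(\bm u)^{1/t}$, I set $a := C_\infty(\bm w) = C_\infty(\bm u)^{1/t} \ge \delta^d > 0$ and $\eps := \alpha_{\BM,d}(m)\,(S_{\BM,d}(\bm w)+o(1))$, so that $a$ is bounded away from zero uniformly (using $\prod u_i \le C_\infty(\bm u)$) and $\eps = O(\alpha_{\BM,d}(m)) \to 0$ uniformly, since $S_{\BM,d}$ is bounded on $[\delta,1]^d$.

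It then remains to linearize $(a+\eps)^t$. Writing $(a+\eps)^t = a^t (1 + \eps/a)^t$ and using that $t$ is bounded while $\eps/a \to 0$ uniformly, a first order expansion gives
\[
(a + \eps)^t = a^t + t\,a^{t-1}\eps + O(\eps^2) = C_\infty(\bm u) + t\,a^{t-1}\eps + o(\alpha_{\BM,d}(m)),
\]
where I used $a^t = C_\infty(\bm u)$ and $\eps^2 = o(\alpha_{\BM,d}(m))$ uniformly. Finally, as $t \to 1$ one has $t\,a^{t-1} = t\,C_\infty(\bm u)^{(t-1)/t} \to 1$ uniformly on $[\delta,1]^d$, and $S_{\BM,d}(\bm w) = S_{\BM,d}(\bm u^{1/t}) \to S_{\BM,d}(\bm u)$ uniformly, the latter because $S_{\BM,d}$ is continuous (a uniform limit of continuous functions on each $[\delta,1]^d$) and $\bm u^{1/t} \to \bm u$ uniformly. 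Substituting back shows $C(\bm u^{1/r})^r = C_\infty(\bm u) + \alpha_{\BM,d}(m)(S_{\BM,d}(\bm u) + o(1))$ uniformly on $[\delta,1]^d$, which is precisely~\ref{cond:sobm}.

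The main obstacle is not any single estimate but maintaining \emph{uniformity} throughout: one must ensure the remainder in the binomial expansion is $o(\alpha_{\BM,d}(m))$ uniformly (which is where boundedness of $a$ away from zero and of $S_{\BM,d}$ on $[\delta,1]^d$ enter), and that $S_{\BM,d}(\bm u^{1/t}) \to S_{\BM,d}(\bm u)$ uniformly, for which I first record that $S_{\BM,d}$ is continuous on $(0,1]^d$. Everything else is routine bookkeeping around the decomposition $r = m t$.
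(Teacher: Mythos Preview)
Your proof is correct and follows essentially the same route as the paper: both write $r = \floor r \cdot t$ with $t = r/\floor r$, substitute $\bm w = \bm u^{1/t}$ (the paper writes this as $\bm u_r = \bm u^{\floor r / r}$), apply \ref{cond:sobmbs} at $\bm w$, Taylor-expand the outer power $(\cdot)^t$, and conclude via uniform continuity of $S_{\BM,d}$ together with max-stability of $C_\infty$. The only cosmetic difference is that you make the lower bound on $C_\infty(\bm w)$ and the continuity of $S_{\BM,d}$ explicit, whereas the paper absorbs these into brief parenthetical remarks.
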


\begin{proof}  
Throughout the proof, we omit the index $\BM$ at all instances. For $\bm u \in [0,1]^d$, let $\bm u_r := \bm u^{\floor{r}/r}$ and note that $\bu_r \to \bu$ uniformly on $[\delta,1]^d$. As explained below, the following expansion, which implies the assertion of the lemma, holds uniformly in $\bu \in [\delta,1]^d$:
\begin{align*}
C(\bu^{1/r})^r &= \Big\{C(\bu_r^{1/\floor{r}})^{\floor{r}} \Big\}^{r/\floor{r}} = \Big\{C_{\floor{r}}(\bu_r) \Big\}^{r/\floor{r}}
\\
&= \Big\{C_\infty(\bu_r) + \alpha_{d}(\floor{r})S_{d}(\bu_r) + o(\alpha_{d}(\floor{r})) \Big\}^{r/\floor{r}}
\\
&\stackrel{(a)}= \Big\{C_\infty(\bu_r) + \alpha_{d}(\floor{r})S_{d}(\bu) + o(\alpha_{d}(\floor{r})) \Big\}^{r/\floor{r}}
\\
&\stackrel{(b)}= C_\infty(\bu_r)^{r/\floor{r}} + \frac{r}{\floor{r}} \{ C_\infty(\bu_r) + o(1) \} ^{r/\floor{r}-1}  \Big\{ \alpha_{d}(\floor{r})S_{d}(\bu) + o(\alpha_{d}(\floor{r})) \Big\}
\\
&\stackrel{(c)}= C_\infty(\bu) + \alpha_{d}(\floor{r})S_{d}(\bu) + o(\alpha_{d}(\floor{r})).
\end{align*} 
Explanations: (a) is a consequence of uniform continuity of $S_{d}$. (b) follows by a Taylor expansion; note that $C_\infty(\bm u_r)$ is bounded away from $0$ uniformly in $\bm u\in[\delta,1]^d$ and $r\in\N$. This latter fact together with the fact that $r/\floor{r}$ converges to 1 implies (c).
\end{proof}

\begin{lemma} \label{lem:unif}
Let \ref{cond:sobm} be met. Then the convergence 
\[
\lim_{r \to \infty}  \frac{C(\bm u^{1/r})^r  - C_\infty(\bm u)}{\alpha_{\BM}(r)}  = S_\BM(\bm u), \qquad \bm u \in [0,1]^d,
\]
is uniform on $[0,1]^d$.
\end{lemma}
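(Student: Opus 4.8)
The plan is to use the uniform convergence on $[\delta,1]^d$ that is already built into \ref{cond:sobm}, and to control the difference quotient
\[
\Delta_r(\bm u) := \frac{C(\bm u^{1/r})^r - C_\infty(\bm u)}{\alpha_\BM(r)}
\]
only in the ``corner'' region $\{\bm u : u_\wedge \le \delta\}$. Fix $\eps>0$. Since the growth bound \eqref{eq:gsb} gives $|S_\BM(\bm u)| \le K_\BM\, u_\wedge(-\log u_\wedge)^{1+|\rho_\BM|}\to 0$ as $u_\wedge\to 0$, one can first choose $\delta\in(0,e^{-1})$ so small that $\sup_{u_\wedge\le\delta}|S_\BM(\bm u)|<\eps/3$. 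On $[\delta,1]^d$ the hypothesis yields $\sup_{[\delta,1]^d}|\Delta_r - S_\BM|<\eps$ for all large $r$, so everything reduces to showing $\sup_{u_\wedge\le\delta}|\Delta_r(\bm u)|<2\eps/3$ for all large $r$; combined with $|\Delta_r-S_\BM|\le|\Delta_r|+|S_\BM|$ this will give the claim.

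The main device is a scaling identity trading smallness of a coordinate for a large exponent. Writing $\bm u = e^{-\bm y}$ and $s := y_\vee = \max_j y_j = -\log u_\wedge\ge 1$, set $\bm w := \bm u^{1/s}=e^{-\bm y/s}$, so $\bm w\in[e^{-1},1]^d$ with $w_\wedge = e^{-1}$. With $r':=r/s$ one has the exact identity $C(\bm u^{1/r})^r = \{C(\bm w^{1/r'})^{r'}\}^s$, while max-stability gives $C_\infty(\bm u)=C_\infty(\bm w)^s$. Abbreviating $a:=C(\bm w^{1/r'})^{r'}$ and $b:=C_\infty(\bm w)$, both lie in $[0,e^{-1}]$ because $C(\bm v)\le\min_j v_j$ and $w_\wedge=e^{-1}$; hence the numerator of $\Delta_r(\bm u)$ equals $a^s-b^s$.

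I would then split $\{u_\wedge\le\delta\}$ by the size of $s$, with threshold $T_r:=(|\rho_\BM|+2)\log r$. For the extreme part $s>T_r$ I use only $|a^s-b^s|\le e^{-s}\le e^{-T_r}=r^{-(|\rho_\BM|+2)}$; since $1/\alpha_\BM$ is regularly varying of index $|\rho_\BM|$ by Lemma~\ref{lem:reg}(ii), the ratio $e^{-T_r}/\alpha_\BM(r)$ is regularly varying of index $-2$, hence tends to $0$ uniformly over this part and over the boundary point $u_\wedge=0$ (where $\Delta_r=S_\BM=0$). For the moderate part $-\log\delta\le s\le T_r$ one has $r'=r/s\ge r/T_r\to\infty$, so the assumed uniform expansion on the compact set $[e^{-1},1]^d$ applies and gives $|a-b|\le C_1\alpha_\BM(r')$ for all large $r$, with $C_1=1+\sup_{[e^{-1},1]^d}|S_\BM|<\infty$ (finite since $S_\BM$, being a locally uniform limit of continuous functions, is continuous). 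A mean-value estimate for $x\mapsto x^s$ together with $a,b\le e^{-1}$ yields $|a^s-b^s|\le s\,e^{1-s}|a-b|$, whence
\[
|\Delta_r(\bm u)| \le C_1 e\, s\,e^{-s}\,\frac{\alpha_\BM(r/s)}{\alpha_\BM(r)}.
\]
A Potter bound for the regularly varying $\alpha_\BM$ (valid since $r,r/s\to\infty$) gives $\alpha_\BM(r/s)/\alpha_\BM(r)\le 2s^{|\rho_\BM|+1}$ for $s\ge1$, so $|\Delta_r(\bm u)|\le 2C_1 e\, s^{|\rho_\BM|+2}e^{-s}$. Since $s\mapsto s^{|\rho_\BM|+2}e^{-s}$ is decreasing for $s>|\rho_\BM|+2$, the supremum over $s\ge-\log\delta$ equals $2C_1 e\,\delta(-\log\delta)^{|\rho_\BM|+2}$, which can be made smaller than $\eps/3$ by shrinking $\delta$ at the outset.

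Putting the three regions together — the interior via the hypothesis, the two boundary parts via the above — shows $\sup_{[0,1]^d}|\Delta_r - S_\BM|<\eps$ for all large $r$. The step I expect to be the crux is the moderate-$s$ estimate: it is precisely there that the possibly unbounded scaling factor $s=-\log u_\wedge$ must be balanced against the reduced index $r'=r/s$, and the argument only closes because the exponential factor $e^{-s}$ dominates the polynomial Potter factor $s^{|\rho_\BM|+1}$ uniformly, while the choice $T_r\asymp\log r$ simultaneously keeps $r'\to\infty$ in the moderate range and forces $e^{-T_r}/\alpha_\BM(r)\to0$ in the extreme range.
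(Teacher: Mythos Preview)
Your proof is correct and uses essentially the same approach as the paper: a split according to whether $u_\wedge$ lies below a threshold of order $r^{-c}$ (your $s>T_r$ matches the paper's $u_\wedge<\gamma_r=r^{-(1+|\rho|)}$), followed on the remaining region by the scaling $\bm u=\bm w^s$ into a fixed compact box, the mean-value inequality for $x\mapsto x^s$, and Potter bounds to control $\alpha_\BM(r/s)/\alpha_\BM(r)$ so that the factor $e^{-s}$ dominates. The only difference is presentational: the paper wraps the moderate-$s$ step in a contradiction argument, whereas you carry the estimate through directly.
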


\begin{proof}  
Write $f_r(\bm u) = C(\bm u^{1/r})^r  - C_\infty(\bm u)$ and omit the index $\BM$ at $S_\BM$ and $\alpha_\BM$. Recall that $\alpha \in \RV_\rho$ with $\rho\le 0$ by Lemma~\ref{lem:reg}, and define $\gamma_r = r^{-(1+|\rho|)}=o(\alpha(r))$. Since $C(\bm u),C_\infty(\bm u) \le u_\wedge=\min(u_1, \dots, u_d)$ by the upper Fr\'echet-Hoeffding bound, we obtain that
\[
\sup_{\bm u \in [0,1]^d \setminus [\gamma_r,1]^d} 
\Big| \frac{f_r(\bm u)}{\alpha(r)} - S(\bm u) \Big| 
\le
\sup_{\bm u \in [0,1]^d \setminus [\gamma_r,1]^d}   
\frac{2 u_\wedge }{\alpha(r)} + |S(\bm u)|  =o(1)
\]
by \eqref{eq:gsb}. It is hence sufficient to show that the claimed convergence is uniform in $\bm u\in[\gamma_r,1]^d$. Suppose this is not the case. Then there exists $\eps>0$ and sequences $r_n \to \infty, \bu_n \in [\gamma_{r_n},1]^d$ with 
\[
\Big| \frac{f_{r_n}(\bm u_n)}{\alpha(r_n)} - S(\bm u_n) \Big|  \ge 2 \eps \quad \forall n.
\]
Here, the sequence $\bu_n$ must satisfy $(\bu_n)_\wedge \to 0$: indeed, for any $\eta > 0$, there exists $n_0$ with $\sup_{\bu \in [\eta,1]^d}|f_{r_n}(\bm u)/{\alpha(r_n)} - S(\bm u)| < \eps$ for all $n \geq n_0$, which implies that $(\bu_n)_\wedge < \eta $ for all $n \geq n_0$. 

By \eqref{eq:gsb} and since $(\bu_n)_\wedge \to 0$ we have $S(\bm u_n) = o(1)$. As a consequence, we may without loss of generality assume that
\begin{align} \label{eq:cont}
\Big| \frac{f_{r_n}(\bm u_n)}{\alpha(r_n)} \Big| \ge  \eps \quad \forall n.
\end{align}
Further, by \eqref{eq:gsb}, we may choose $\delta\in(0,1)$ such that $|S(\bm u)| \le \eps$ for all $\bm u$ with $u_\wedge \le \delta$. 

Next, note that $|\log \delta|/|\log ((\bu_n)_\wedge) | = (\log \delta)/\log ((\bu_n)_\wedge) $ and define
\[
 \bm v_n= \bm u_n^{s_n}, \qquad s_n= \frac{|\log \delta|}{|\log ((u_n)_\wedge) |} \ge \frac{|\log \delta|}{|\log (\gamma_{r_n}) |} = \frac{|\log \delta|}{1+|\rho|} \times  \frac1{\log r_n},
\]
so that  $s_n \to 0$ and 
\[
(\bm v_n)_\wedge = \min(v_{n,1}, \dots, v_{n,d}) =  ((\bu_n)_\wedge)^{s_n} = \delta
\]
and thus $\bm v_n\in [\delta,1]^d$.
Hence, by the mean value theorem and the Fr\'echet-Hoeffding bounds, 
\begin{align*}
|f_{r_n}(\bm u_n)|
=
|f_{r_n}(\bm v_n^{1/s_n}) |
&= 
\big|\{ C(\bm v_n^{1/(r_ns_n)})^{r_ns_n}\}^{1/s_n} - C_\infty(\bm v_n)^{1/s_n}\big| \\
&\le 
\frac1{s_n} \delta^{1/s_n - 1} |f_{r_ns_n}(\bm v_n)| 
\le 
\frac1{s_n} \delta^{1/s_n - 1} \sup_{\bm v \in [\delta, 1]^d}|  f_{r_ns_n}(\bm v)|.
\end{align*}
Further, recall the Potter bounds (e.g., Proposition B.1.9(5) in \citealp{DehFer06}): since $\alpha\in \RV_\rho$, there exists $r_0> 0$ such that
\[
\frac{\alpha(rx)}{\alpha(r)} \le 2 x^{-(1+|\rho|)} \qquad \forall\ x \in(0,1], r \ge r_0: rx\ge r_0.
\]
The preceding  two displays (note that $r_ns_n \gtrsim r_n/\log r_n \to \infty$) imply that, for sufficiently large $n$,
\begin{align*}
\Big| \frac{f_{r_n}(\bm u_n)}{\alpha(r_n)} \Big|  
&\le 
2 {s_n}^{-(2+|\rho|)} \delta^{1/s_n - 1}   \sup_{\bm v \in [\delta, 1]^d} \Big|  \frac{f_{r_ns_n}(\bm v)}{\alpha(r_ns_n)}\Big|  \\
&= 
2 {s_n}^{-(2+|\rho|)} \delta^{1/s_n - 1} \Big\{  \sup_{\bm v \in [\delta, 1]^d} |S(\bm v)| + o(1) \Big\}.
\end{align*}
The upper bound converges to $0$, which yields a contradiction to \eqref{eq:cont}.
\end{proof}

\subsection{The relationship between \ref{cond:sobm} and \ref{cond:sopot}.}

This section contains the first main result of the paper 
on the relationship between the two second order conditions \ref{cond:sobm} and \ref{cond:sopot}. 
Depending on the speed of convergence of $\alpha_\M$, we will occasionally need the following functions
\begin{align}
\label{eq:gamdir1} 
\Gamma_1(\bm x) 
&=
\partial_{\bm x^2}L(\bm x) =
 \lim_{r \to \infty} r\Big\{L\Big(\bm x + \frac{\bm x^2}{r}\Big) - L(\bm x)\Big\}, \qquad \bm x \in [0,\infty)^d, \\
 \label{eq:gamdir2}
 \Gamma_2(\bm x) 
&=
- \partial_{-\bm x^2}L(\bm x) =
 \lim_{r \to \infty} r\Big\{L(\bm x) - L\Big(\bm x - \frac{\bm x^2}{r}\Big)\Big\}, \qquad \bm x \in [0,\infty)^d.
\end{align}
Note that both limits necessarily exist for all $\bm x \in [0,\infty)^d$: indeed, by convexity of $L$, the difference quotients inside the limits  are monotone functions of $r$ (see Theorem 23.1 in \citealp{Roc70}) and, by Lipschitz continuity of $L$, they are uniformly bounded.  Furthermore, the functions $\Gamma_1, \Gamma_2$ must be homogeneous of order $2$, satisfy $0\le \Gamma_\ell(\bm x) \le \sum_{j=1}^d x_j^2$ and may be discontinuous. For a class of examples regarding the last assertion, consider the case $d=2$ with Pickands dependence function $A(t) = L(1-t,t)$. A straightforward but tedious calculation utilizing the homogeneity of $L$ shows that 
\[
L\Big(1-t + \frac{(1-t)^2}{r},t + \frac{t^2}{r}\Big) = A\Big(t+\frac{t(1-t)(2t-1)}{r}\Big) + A(t)\frac{1-2t+2t^2}{r} + O(r^{-2}). 
\]
Hence, we have
\[
\Gamma_1(1-t,t) = A(t)(1-2t+2t^2) + \lim_{r \to \infty} r\Big\{A\Big(t+\frac{t(1-t)(2t-1)}{r}\Big) - A(t)\Big\}. 
\]
For $t \in (1/2,1)$ this limit is continuous if and only if $s \mapsto \lim_{h \downarrow 0} (A(s+h) - A(s))/h$ is continuous at $t$ which can fail for piecewise linear functions $A$. 

For the general result to come, the convergences  in \eqref{eq:gamdir1} and \eqref{eq:gamdir2} must be uniform on $[0,T]^d$. Sufficient conditions are formulated in the next lemma, where we also provide a representation of $\Gamma_\ell$ in terms of the partial derivatives of $L$.

\begin{lemma}\label{lem:gamma}
(i) If the limit $\Gamma_1$ in \eqref{eq:gamdir1} is continuous, then the convergence is uniform on $[0,T]^d$ for any $T>0$. The same assertion holds for $\Gamma_2$ and the convergence in \eqref{eq:gamdir2}.

\smallskip
\noindent (ii) If, for all $j=1, \dots, d$, the first order partial derivative $\dot L_j$ of $L$ exists and is continuous on $\{\bm x \in [0,\infty)^d:x_j>0\}$, then the convergences in \eqref{eq:gamdir1}  and \eqref{eq:gamdir2} are uniform on $[0,T]^d$ for any $T>0$ and we have $\Gamma_1=\Gamma_2=\Gamma$ with
\[
\Gamma(\bm x) =  \sum_{j: x_j>0} x_j^2\dot L_j(\bm x).
\]
\end{lemma}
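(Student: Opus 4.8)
The plan is to derive part (i) as a direct application of Dini's theorem, and then to obtain part (ii) from part (i) after computing the limit explicitly and verifying its continuity. For part (i), I would fix $T>0$ and set $g_r(\bm x) = r\{L(\bm x + \bm x^2/r) - L(\bm x)\}$, which is continuous on the compact set $[0,T]^d$ since $L$ is continuous and $\bm x \mapsto \bm x + \bm x^2/r$ is polynomial. Writing $\phi_{\bm x}(t) = L(\bm x + t\bm x^2)$, which is convex in $t$, the difference quotient $g_r(\bm x) = \{\phi_{\bm x}(1/r) - \phi_{\bm x}(0)\}/(1/r)$ is non-increasing in $r$ for each fixed $\bm x$ — this is exactly the monotonicity already recorded before the statement — so $g_r \downarrow \Gamma_1$ pointwise. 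Under the hypothesis that $\Gamma_1$ is continuous, Dini's theorem applied along an increasing sequence $r_n \to \infty$ yields uniform convergence of $g_{r_n}$; and since $M(r) := \sup_{\bm x \in [0,T]^d}\{g_r(\bm x) - \Gamma_1(\bm x)\}$ is non-increasing in $r$, the vanishing of $M(r_n)$ forces $M(r) \to 0$ as $r \to \infty$. The same argument applies verbatim to $h_r(\bm x) = r\{L(\bm x) - L(\bm x - \bm x^2/r)\}$, which is non-decreasing in $r$ and increases to $\Gamma_2$.

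For part (ii), the first step is to identify the limit pointwise. Fixing $\bm x$ and setting $J = \{j : x_j > 0\}$, the perturbation $\pm\bm x^2/r$ is supported on $J$ (since $x_j = 0$ gives $x_j^2 = 0$) and keeps those coordinates positive for small $1/r$. Restricting $L$ to the face $\{\bm y : y_j = 0 \text{ for } j \notin J\}$ and invoking the standard fact that existence and continuity of all partial derivatives implies $C^1$-differentiability, I would conclude that this restriction is differentiable at $\bm x$. Consequently the right- and left-sided directional derivatives of $t \mapsto L(\bm x + t\bm x^2)$ at $t=0$ both exist and equal $\sum_{j \in J} x_j^2 \dot L_j(\bm x)$, which is precisely the assertion $\Gamma_1(\bm x) = \Gamma_2(\bm x) = \Gamma(\bm x)$.

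It then remains to verify that $\Gamma$ is continuous on $[0,T]^d$, after which part (i) delivers the uniform convergence. Continuity on the interior is immediate from continuity of each $\dot L_j$. At a boundary point $\bm x_0$ with $J_0 = \{j : x_{0,j} > 0\}$, I would split the sum: the terms with $j \in J_0$ converge by continuity of $\dot L_j$ on $\{x_j > 0\}$ (as $x_j \to x_{0,j} > 0$), while the terms with $j \notin J_0$ satisfy $|x_j^2 \dot L_j(\bm x)| \le x_j^2 \cdot \mathrm{Lip}(L) \to 0$, using that the partial derivatives are bounded by the Lipschitz constant of $L$. Hence $\Gamma$ is continuous on $[0,T]^d$.

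I expect the boundary analysis in part (ii) to be the main obstacle: differentiability of $L$ is only available along the active coordinates $J$ on which the perturbation actually acts, so the correct device is to restrict to the corresponding face before applying the continuous-partials-implies-$C^1$ theorem; and the continuity of $\Gamma$ across the faces of $[0,\infty)^d$ hinges on the uniform boundedness of the partials furnished by the Lipschitz property of $L$. By contrast, part (i) should be routine once the monotonicity (already noted) and the continuity of the prelimiting functions $g_r, h_r$ are in hand.
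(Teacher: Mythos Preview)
Your proof of part (i) via Dini's theorem is essentially the same as the paper's; you are just a little more careful about passing from sequences to the continuous parameter $r$ via monotonicity, which is a harmless refinement.

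For part (ii) your route differs from the paper's. The paper does not go through (i): it applies the mean value theorem directly to write $\Gamma_r(\bm x) = \sum_{j:x_j>0} x_j^2 \dot L_j(\bm x + o(1))$, then proves uniform convergence of $\Gamma_r$ to $\Gamma$ by bounding, for each $j$, the term $x_j^2|\dot L_j(\bm x + o(1)) - \dot L_j(\bm x)|$ separately on the regions $\{x_j < \eps\}$ (using boundedness of $\dot L_j$) and $\{x_j \ge \eps\}$ (using uniform continuity of $\dot L_j$ there). Your approach instead (a) identifies the pointwise limit by restricting $L$ to the face determined by the active coordinates and invoking the continuous-partials-implies-$C^1$ theorem on that face, (b) verifies continuity of $\Gamma$ by the same small/large-coordinate splitting you describe, and (c) feeds this back into part (i). Both arguments are correct and hinge on the same two ingredients --- boundedness of $\dot L_j$ from the Lipschitz property of $L$, and uniform continuity of $\dot L_j$ away from $\{x_j=0\}$ --- but your version is more modular (it reuses (i) cleanly), whereas the paper's is slightly more direct and avoids the detour through face-restricted differentiability.
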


\begin{proof} 
The assertion in (i) is a consequence of Dini's theorem: 
\[
\Gamma_r(\bm x) := r \{ L(\bm x + \bm x^2/r) -L(\bm x ) \}
\] 
is a continuous function of $\bm x$ and a monotone function of $r$ converging point-wise to a limit $\Gamma$ which is continuous by assumption.   

For the proof of (ii), apply the mean value theorem to write 
\[
\Gamma_r(\bm x) = \sum_{j: x_j>0} x_j^2 \dot L_j(\bm x + o(1)),
\] 
where the $o$-term is uniform on $[0,T]^d$. As a consequence,
\[
\big| \Gamma_r(\bm x) - \Gamma(\bm x) \big| 
\le
\sum_{j: x_j>0} x_j^2 | \dot L_j(\bm x+o(1)) - \dot L (\bm x)|.
\]
It is now sufficient to show uniform convergence to zero for each summand on the right-hand side separately. For arbitrary $\eps>0$, decompose $[0,T]^d$ into $\{\bm x\in[0,T]^d: x_j < \eps\}$ and $\{\bm x\in[0,T]^d: x_j \ge \eps\}$. On the first set, we have $x_j^2 | \dot L_j(\bm x+o(1)) - \dot L (\bm x)|< \eps^2$ by boundedness of $\dot L_j$. On the second set, the function $\dot L_j$ is uniformly continuous, whence  $x_j^2 | \dot L_j(\bm x+o(1)) - \dot L (\bm x)| = o(1)$ uniformly.\end{proof}

The next two theorems are the main results of this section, and provide simple conditions that allow to derive \ref{cond:sobm} from \ref{cond:sopot} and vice versa. The most important consequence is that, under minimal extra conditions, \ref{cond:sobm} with second order parameter $\rho_\BM \ne -1$ implies  \ref{cond:sopot} with second order parameter $\rho_\POT=\max(\rho_\BM, -1)$, and vice versa.
Let
\[
L_\vee (\bm x) = \max(x_1, \dots, x_d), \quad \bm x\in [0,\infty)^d
\]
denote the stable tail dependence function corresponding to perfect tail dependence.


\begin{theorem} \label{theo:bp}
(a) Suppose that \ref{cond:sopot} is met with $\alpha_\POT$ regularly varying of order $\rho_\POT\le 0$ and assume that $\lim_{r \to \infty} 2r\alpha_\POT(r) = c_\POT\in [0, \infty]$.
\begin{enumerate}
\item[(i)] If $c_\POT = \infty$, then \ref{cond:sobm} holds with $\alpha_\BM \equiv \alpha_\POT$ and $S_\BM(e^{-\bm x})= -C_\infty(e^{-\bm x})  S_\POT(\bm x) $.
\item[(ii)] If $c_\POT = 0$ and $L\ne L_\vee$, then \ref{cond:sobm} holds if and only if $\Gamma_2$ in \eqref{eq:gamdir2} is continuous. We may choose $\alpha_\BM(r) = (2r)^{-1}$ and $S_\BM(e^{-\bm x}) = C_\infty(e^{-\bm x}) \{ \Gamma_2(\bm x)  - L^2(\bm x) \}$.
\item[(iii)] If $c_\POT \in (0,\infty)$, $\Gamma_2$ is continuous, and $S_\BM$ defined below is not the null function, then \ref{cond:sobm} is met with %
\[
\alpha_\BM(r) = \alpha_\POT(r) + \frac1{2r},
\]
and with
\[
S_{\BM} (e^{- \bm x}) = C_\infty(e^{-\bm x}) \big\{ \lambda_\POT \{\Gamma_2(\bm x)  - L^2(\bm x) \} -  (1-\lambda_\POT) S_\POT(\bm x) \big\},  \qquad \lambda_\POT=\frac{1}{1+c_\POT}.
\] 
\end{enumerate}

\medskip
\noindent
(b) Suppose that \ref{cond:sobm} is met with $\alpha_\BM$ regularly varying of order $\rho_\BM\le 0$ and assume that $\lim_{r \to \infty} 2r\alpha_\BM(r) = c_\BM\in [0, \infty]$. 
\begin{enumerate}
\item[(i)] If $c_\BM = \infty$, then \ref{cond:sopot} holds with $\alpha_\POT \equiv \alpha_\BM$ and $S_\POT(\bm x) = - S_\BM(e^{-\bm x})/C_\infty(e^{-\bm x})$.
\item[(ii)] If $c_\BM = 0$ and $L\ne L_\vee$, then \ref{cond:sopot} holds if and only if $\Gamma_1$ in \eqref{eq:gamdir1} is continuous. We may choose $\alpha_\POT(r) = (2r)^{-1}$ and $S_\POT(\bm x) = \Gamma_1(\bm x)  - L^2(\bm x)$.
\item[(iii)] If $c_\BM \in (0,\infty)$, $\Gamma_1$ is continuous, and $S_\POT$ defined below is not the null function, then \ref{cond:sopot} is met with %
\[
\alpha_\POT(r) = \alpha_\BM(r) + \frac1{2r},
\]
and with
\[
S_\POT(\bm x) = 
 \lambda_\BM \{\Gamma_1(\bm x)  - L^2(\bm x) \} - (1-\lambda_\BM) \frac{S_\BM(e^{-\bm x}) }{C_\infty(e^{-\bm x})} ,  \qquad \lambda_\BM=\frac{1}{1+c_\BM}. 
\]
\end{enumerate}

\end{theorem}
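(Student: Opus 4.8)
The plan is to reduce both directions to a single ``master expansion'' relating the BM quantity $C(e^{-\bm x/r})^r$ to the POT quantity $G_r(\bm x):=r\{1-C(\bm 1-\bm x/r)\}$, so that $C(\bm 1-\bm x/r)=1-G_r(\bm x)/r$ and $G_r\to L$ by \eqref{eq:dompot}. The only algebra needed is $C(\bm u^{1/r})^r=\exp\{r\log C(\bm u^{1/r})\}$ together with the elementary expansions $\log(1-a)=-a-a^2/2+O(a^3)$ and $e^z=1+z+o(z)$; the point is that the quadratic term $a^2/2$ produces exactly the $L^2/(2r)$ contribution, whereas the discrepancy between $e^{-\bm x/r}$ and $\bm 1-\bm x/r$ produces the $\Gamma$ contribution.

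For (a) I would set $\bm u=e^{-\bm x}$ and write $e^{-\bm x/r}=\bm 1-\bm z_r/r$ with $\bm z_r=r(\bm 1-e^{-\bm x/r})=\bm x-\bm x^2/(2r)+O(r^{-2})$ componentwise, so that $C(e^{-\bm x/r})=1-G_r(\bm z_r)/r$. Applying \ref{cond:sopot} at the $r$-dependent argument $\bm z_r$ (legitimate since the POT convergence is uniform on compacts) gives $G_r(\bm z_r)=L(\bm z_r)+\alpha_\POT(r)S_\POT(\bm z_r)+o(\alpha_\POT(r))$, which I substitute into $C(e^{-\bm x/r})^r=\exp\{-G_r(\bm z_r)-G_r(\bm z_r)^2/(2r)+o(r^{-1})\}$. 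Using $G_r(\bm z_r)\to L(\bm x)$ in the quadratic term, $S_\POT(\bm z_r)\to S_\POT(\bm x)$ by continuity, and — crucially — the directional expansion $L(\bm z_r)=L(\bm x)-\Gamma_2(\bm x)/(2r)+o(r^{-1})$ (with $\Gamma_2$ from \eqref{eq:gamdir2}, since $\bm z_r$ moves in the $-\bm x^2$ direction), a final expansion of the exponential yields the master identity
\[
C(e^{-\bm x/r})^r-C_\infty(e^{-\bm x})=C_\infty(e^{-\bm x})\Big[\tfrac{1}{2r}\{\Gamma_2(\bm x)-L^2(\bm x)\}-\alpha_\POT(r)S_\POT(\bm x)\Big]+o\big(\alpha_\POT(r)+r^{-1}\big),
\]
uniformly on compact $\bm x$-sets, i.e. uniformly in $\bm u\in[\delta,1]^d$. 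The three cases are then read off by comparing the competing rates $1/(2r)$ and $\alpha_\POT(r)$ through $c_\POT=\lim 2r\alpha_\POT(r)$: in (i) the $1/(2r)$ term is negligible; in (ii) the $\alpha_\POT$ term is negligible; in (iii) both survive and the normalization $\alpha_\BM=\alpha_\POT+1/(2r)$ produces the weights $(2r)^{-1}/\alpha_\BM\to\lambda_\POT=1/(1+c_\POT)$ and $\alpha_\POT/\alpha_\BM\to 1-\lambda_\POT$.

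Part (b) is the mirror image: to evaluate $G_r(\bm x)$ I substitute $\bm 1-\bm x/r=e^{-\bm w_r/r}$ with $\bm w_r=-r\log(\bm 1-\bm x/r)=\bm x+\bm x^2/(2r)+O(r^{-2})$ — now moving in the $+\bm x^2$ direction, which is exactly why $\Gamma_1$ rather than $\Gamma_2$ appears — apply \ref{cond:sobm} at $\bm u=e^{-\bm w_r}$, take logarithms of the two representations of $C(\bm 1-\bm x/r)^r$, and solve for $G_r(\bm x)-L(\bm x)$. This yields $G_r(\bm x)-L(\bm x)=\tfrac{1}{2r}\{\Gamma_1(\bm x)-L^2(\bm x)\}-\alpha_\BM(r)S_\BM(e^{-\bm x})/C_\infty(e^{-\bm x})+o(\alpha_\BM(r)+r^{-1})$, from which the three cases follow exactly as before with $\lambda_\BM=1/(1+c_\BM)$. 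For the ``only if'' halves of the cases~(ii) I would use that the prelimit ratios in \ref{cond:sobm} and \ref{cond:sopot} are continuous in $\bm u$, so their uniform limits $S_\BM,S_\POT$ are continuous; comparing the always-valid pointwise master expansion at rate $1/(2r)$ against the assumed second order condition, and invoking uniqueness of the regularly varying normalization (Lemma~\ref{lem:reg}) at a point where $\Gamma_\ell-L^2\neq 0$ — such a point exists precisely because $L\neq L_\vee$ — forces $\rho=-1$ and identifies $\Gamma_\ell-L^2$ with a continuous multiple of $S/C_\infty$, hence $\Gamma_\ell$ continuous.

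The hard part is the uniform directional expansion of $L$ along the perturbed, $r$-dependent direction. Two issues must be separated. First, $\bm z_r$ (resp. $\bm w_r$) equals $\bm x\mp\bm x^2/(2r)$ only up to an $O(r^{-2})$ error; by Lipschitz continuity of $L$ this error contributes $O(r^{-2})=o(r^{-1})$ and is harmless. Second, and more delicate, the difference quotient $2r\{L(\bm x)-L(\bm x-\bm x^2/(2r))\}$ converges to $\Gamma_2(\bm x)$ only pointwise in general; to upgrade this to the uniform convergence required whenever the $1/(2r)$ term contributes to the limit (cases (ii) and (iii)) I would invoke Lemma~\ref{lem:gamma}(i), i.e. continuity of $\Gamma_2$ (resp. $\Gamma_1$) together with Dini's theorem. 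In case~(i), by contrast, no continuity is needed: the $1/(2r)$ term is then $o(\alpha_\POT(r))$ and only the crude uniform bound $|L(\bm z_r)-L(\bm x)|=O(r^{-1})$ from Lipschitz continuity is used. Throughout one must also verify that every $o$- and $O$-term, and the continuity arguments for $S_\POT,S_\BM$, hold uniformly on the relevant compact set, including at boundary points where some $x_j=0$ (equivalently $u_j=1$); here the growth bound \eqref{eq:gsb} and the uniform boundedness $0\le\Gamma_\ell(\bm x)\le\sum_j x_j^2$ keep all terms under control.
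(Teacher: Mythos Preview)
Your proposal is correct and follows essentially the same route as the paper: the same substitution $e^{-\bm x/r}=\bm 1-\bm z_r/r$ with $\bm z_r=\bm x-\bm x^2/(2r)+O(r^{-2})$, the same second-order Taylor expansion of the logarithm producing the $L^2/(2r)$ term, and the same directional derivative $\Gamma_2$ (resp.\ $\Gamma_1$) arising from $L(\bm z_r)$, combined into the same master expansion and case split on $c_\M$. The only cosmetic difference is that the paper first passes to the equivalent ``log version'' \eqref{eq:sobm2} and works there, whereas you expand the exponential directly; and for the ``only if'' in~(ii) the paper additionally spells out why $\Gamma_2-L^2\not\equiv 0$ is equivalent to $L\ne L_\vee$ via Euler's homogeneous function theorem, a point you take for granted.
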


In other words, if one of the two conditions holds with $c_\M=\infty$, which is only possible for $\rho_m\in[-1,0]$ and necessarily the case if $\rho_\M \in( -1,0]$, then the other condition holds as well with $\rho_\BM = \rho_\POT$; in that case we can choose $\alpha_\POT = \alpha_\BM$.  

When \ref{cond:sopot} is met with $c_\POT =0$, which is only possible for $\rho_\POT\le -1$ and necessarily the case if $\rho_\POT <-1$,  the proof of Theorem~\ref{theo:bp} actually shows that the limit relation required for \ref{cond:sobm}, 
\[
\lim_{r \to \infty}  \frac{\{ C(\bm u^{1/r})^r - C_\infty(\bm u)\}}{1/(2r)} = C_\infty(\bm u)\{\Gamma_2(-\log \bm u)  - L^2(-\log \bm u)\}, \qquad \bm u \in [0,1]^d,
\]
holds point-wise in $\bm u$, and that this limit is not the zero function if and only if $L \ne L_\vee$  (i.e., only $\rho_\BM=-1$ is possible). However, even if the limit is non-zero, \ref{cond:sobm} can still fail due to a lack of uniformity. If $L = L_\vee$, a close look at the proofs (keeping track of higher order terms) reveals that \ref{cond:sopot} implies \ref{cond:sobm} provided that $\lim_{r \to \infty} r^2 \alpha_\POT(r) = \infty$. In that case we can choose $\alpha_\BM \equiv \alpha_\POT$ and $S_\BM(e^{-\bm x}) = - S_\POT(\bm x)C_\infty(e^{-\bm x})$. If $\lim_{r \to \infty} r^2 \alpha_\POT(r) < \infty$, even higher order expansions along similar lines are possible. Similar comments apply to case (b) in Theorem~\ref{theo:bp}.

When \ref{cond:sopot} is known to hold with $c_\POT\in(0,\infty)$, which is only possible for $\rho_\POT = -1$, then both $\rho_\BM = -1$ and $\rho_\BM < -1$ is possible (and vice versa), and additional case-by-case calculations are necessary. As a matter of fact, the function $S_\BM$ defined in Theorem~\ref{theo:bp}(a), and likewise $S_\POT$ in (b), may be zero. Indeed, in Section~\ref{sec:est} we provide an example where $\rho_\POT = -1, \rho_\BM = -2$ (and vice versa). Starting from $\rho_\POT = -1$ we see that $S_\BM$ defined in Theorem~\ref{theo:bp} (a) must be zero since otherwise we would have $\rho_\BM = -1$.  A further, more direct calculation, is possible for the bivariate independence copula. A simple calculation shows that  \ref{cond:sopot} is met with $\alpha_\POT(r) = 1/(2r)$ and $S_\POT(x,y) = -2xy$. Further, $\lambda_\POT=1/2$, $L(x,y) = x+y$ and $\Gamma_2(x,y) =\Gamma(x,y) = x^2+y^2$, which implies that the function $S_\BM$ in part (iii) of (a) is the null-function. Hence, Theorem~\ref{theo:bp} does not make any assertion about whether \ref{cond:sobm} holds. In fact, since the independence copula is an extreme-value copula, the numerator on the right-hand side of \ref{cond:sobm} is actually zero, so \ref{cond:sobm} is not met at all as the limit cannot be nonzero.

\begin{proof}[Proof of Theorem~\ref{theo:bp}] It suffices to consider the case $\bm u = e^{-\bm x} \in (0,1]^d$, that is, $\bm x = - \log \bm u \in[0,\infty)^d$. Subsequently, let $\delta\in(0,1)$ and $T>1$ be arbitrary, but fixed. All $o$- and $O$-notations within the proof are to be understood uniformly in $[\delta,1]^d$ or $[0,T]^d$, depending on whether $\bm u$ or $\bm x$ is involved.

Now, either \ref{cond:sobm}  or \ref{cond:sopot} implies \eqref{eq:dombm} (and the convergence is uniform on $[\delta, 1]^d$) and \eqref{eq:dompot} (and the convergence is uniform on $[0,T]^d$),  and we begin by collecting two properties implied by the latter two limit relations. 
First of all, since
\[
r \log C(\bm u^{1/r})  - \log C_\infty(\bm u) =o(1)
\]
by~\eqref{eq:dombm} and  $C_\infty(\bm u) > 0$, a Taylor expansion of the exponential function implies that 
\begin{align*}
	\frac{C(\bm u^{1/r})^r  - C_\infty(\bm u)}{\alpha_{\BM}(r)} 
	&=
	\frac{r \log C(\bm u^{1/r})  - \log C_\infty(\bm u)}{\alpha_{\BM}(r)}  \\
	&\hspace{1cm}\times 
	\Big[
		C_\infty(\bm u) + \frac12 \big\{ C_\infty(\bm u) + o(1) \big\}  
		\Big\{ r \log C(\bm u^{1/r})  - \log C_\infty(\bm u) \Big\} 
	\Big] \\
	&= 
	\frac{r \log C(\bm u^{1/r}) - \log C_\infty(\bm u)}{\alpha_{\BM}(r)} \{ C_\infty(\bm u) + o(1) \}.
\end{align*}
As a consequence, the convergence in \ref{cond:sobm} for $\bm  u \in (0,1]^d$ is actually equivalent to 
\begin{align} \label{eq:sobm2}
\lim_{r \to \infty} \frac{r \log C(\bm u^{1/r})  - \log C_\infty(\bm u)}{\alpha_{\BM}(r)} = \frac{S_\BM(\bm u) }{C_\infty(\bm u)},
\end{align}
and if either the convergence in \ref{cond:sobm} or in \eqref{eq:sobm2}  is uniform on $[\delta,1]^d$, then so is the other. Second, 
since $C$ is Lipschitz continuous, 
we obtain that, by~\eqref{eq:dompot}, 
\begin{align} \label{eq:r1c}
r \{ 1-C(e^{-\bm x/r})\} = r \{ 1-C(1-\bm x/r)\} +  O(r^{-1}) = L(\bm x) + o(1).
\end{align}

Let us now prove the assertions in (a). As argued above, it suffices to show \eqref{eq:sobm2}.  Now, the second order condition \ref{cond:sopot} can be rewritten as
\[
r \{ 1-C(1-\bm x/r) \} =  L(\bm x) + \alpha_\POT(r) \{ S_\POT(\bm x) + o(1) \}.
\]
Let $\bm y_r = r(1-e^{-\bm x/r}) - \bm x = - \bm x^2/(2r) + O(r^{-2})$, and note that $e^{-\bm x/r}=1-(\bm x + \bm y_r)/r$. We may thus write
\begin{align*}
r \{ 1-C(e^{-\bm x/r}) \} 
&=
r \{ 1-C(1-(\bm x+ \bm y_r)/r) \}  \\
&=
 L(\bm x + \bm y_r ) + \alpha_\POT(r) \{ S_\POT(\bm x+ \bm y_r) + o(1) \}
\\
&= L(\bm x - \bm x^2/(2r)) + \alpha_\POT(r) \{ S_\POT(\bm x) + o(1) \} + O(r^{-2})
\end{align*}
by uniform continuity of $S_\POT$ and Lipschitz continuity of $L$. As a consequence, by a Taylor expansion of the logarithm,
\begin{align}
- r \log C(\bm u^{1/r})  
&= 
- r \log(1+C(e^{-\bm x/r}) - 1) \nonumber \\
&=
 r \{ 1 - C(e^{-\bm x/r}) \} + [ r \{ 1- C(e^{-\bm x/r})) ]^2/(2r)  + O(r^{-2}) \label{eq:tayl} \\
&=
L(\bm x - \bm x^2/(2r)) + \alpha_\POT(r) \{ S_\POT(\bm x) + o(1) \} + \{ L(\bm x)^2 + o(1)\}/(2r) + O(r^{-2}),\nonumber
\end{align}
where we have used \eqref{eq:r1c} in the last equality.
Hence, we have
\begin{align*}
r \log C(\bm u^{1/r})  - \log C_\infty(\bm u)
&=
\frac{1}{2r}\Big[ 2r \{ L(\bm x) - L(\bm x - \bm x^2/(2r)) \}  - L(\bm x)^2\Big] -\alpha_\POT(r) S_\POT(\bm x) \\
&\qquad  + o(\alpha_\POT(r) + r^{-1}).
\end{align*}
The claim in (i) now follows from boundedness of the term in square brackets, which is a consequence of Lipschitz continuity of $L$. The claim in (iii) follows after some elementary calculations taking into account that the convergence in~\eqref{eq:gamdir2} is uniform by Lemma~\ref{lem:gamma}. For the proof of (ii), note that the latter display implies that
\begin{align*}
\lim_{r \to \infty}  2r \{r \log C(\bm u^{1/r})  - \log C_\infty(\bm u)\}
&= - L(\bm x)^2 + \lim_{r \to \infty} 2r \{ L(\bm x) - L(\bm x - \bm x^2/(2r)) \}   
\\
& = \Gamma_2(\bm x) - L^2(\bm x)
\end{align*}
point-wise in $\bm x \in [0,\infty)^d$. By \eqref{eq:sobm2}, this is equivalent to pointwise convergence in \ref{cond:sobm} with $\alpha_\BM = 1/(2r)$ and $S_\BM(e^{-\bm x}) = C_\infty(e^{-\bm x}) \{ \Gamma_2(\bm x)  - L^2(\bm x) \}$. The assertion in (ii) then follows from the facts that the convergence is uniform if and only if the convergence in~\eqref{eq:gamdir2} is uniform (which is equivalent to continuity of $\Gamma_2$ by Lemma~\ref{lem:gamma}) and that the limit is non-zero if and only if $L \ne L_\vee$. To see the latter, a simple calculation shows that  $\Gamma_2=L^2$ holds for $L=L_\vee$. On the other hand, if $\Gamma_2=L^2$, then $L^2(\bm x)= \Gamma_2(\bm x) = \sum_{j=1}^d x_j^2 \dot L_j(\bm x)$ for all $\bm x$ in the set $\mathcal C$ of points where $L$ is continuously differentiable; note that the complement of $\mathcal C$ is a Lebesgue null set by Theorem 25.5 in \citealp{Roc70}. A version of Euler's homogeneous function theorem then implies  
\[
|L(\bm x) - L^2(\bm x)| = |L(\bm x) - \Gamma_2(\bm x)| = | \sum_{j=1}^d (x_j - x_j^2) \dot L_j(\bm x) | \le \max(x_1, \dots, x_d) \sum_{j=1}^d |x_j-1|
\] 
for all $\bm x \in \mathcal C$. Taking limits along a sequence in $\mathcal C$ converging to $\bm 1$, we obtain that $L(\bm 1)=L^2(\bm 1)$ and hence $L(\bm 1)=1$, which only occurs for $L=L_\vee$. 

Let us now prove part (b) of the theorem. The assertion in \eqref{eq:sobm2}, which is equivalent to \ref{cond:sobm}, may be rewritten as
\[
-r \log C(e^{-\bm x/r})= L(\bm x) - \alpha_\BM(r) \{ S_\BM (e^{-\bm x}) / C(e^{-\bm x}) + o(1) \}.
\]
The Taylor expansion in the first two lines of \eqref{eq:tayl}, together with \eqref{eq:r1c}, allows to rewrite this as
\[
r\{ 1-C(e^{-\bm x/r}) \} = L(\bm x) - \alpha_\BM(r) \{  S_\BM (e^{-\bm x}) / C(e^{-\bm x}) + o(1) \} -  \{ L(\bm x)^2 + o(1)\}/(2r)  + O(r^{-2}).
\]
Let $\bm z_r = -r\log(1 - \bm x/r) - \bm x = \bm x^2/(2r) + O(r^{-2})$, and note that $1-\bm x/r=e^{-(\bm x+\bm z_r)/r}$. The previous display then implies
\begin{align*}
r\{1- C(1-\bm x/r)\} 
&=
r\{1-C(e^{-(\bm x+\bm z_r)/r})\} \\
&=  L(\bm x + \bm z_r) - \alpha_\BM(r) \{  S_\BM (e^{-\bm x-\bm z_r}) / C(e^{-\bm x-\bm z_r}) + o(1) \}  \\
& \hspace{2cm} -  \{ L(\bm x+ \bm z_r)^2 + o(1)\}/(2r)  + O(r^{-2}) \\
&=  L(\bm x  + \bm x^2/(2r) ) - \alpha_\BM(r) \{  S_\BM (e^{-\bm x}) / C(e^{-\bm x}) + o(1) \}  \\
& \hspace{2cm}  -  \{ L(\bm x)^2 + o(1)\}/(2r)  + O(r^{-2}), 
\end{align*}
and therefore, 
\begin{align*}
r\{1- C(1-\bm x/r)\}  - L(\bm x)
&=
\frac{1}{2r} 
\Big[ 2r \{ L(\bm x + \bm x^2/(2r)) - L(\bm x) \}  -L(\bm x)^2 \Big] 
- \alpha_\BM(r) \frac{S_\BM(e^{-\bm x})}{C(e^{-\bm x})}
\\
& \qquad + o(r^{-1}+\alpha_\BM(r)).
\end{align*}
The remaining part of the proof is now similar to the proof of (a).
\end{proof}

\section{Second Order Conditions in Particular Models} \label{sec:ex}

\subsection{Outer Power Transform of a Clayton Copula} \label{ex:opc}
For $\theta>0$ and $\beta \ge 1$, the outer power transform of a Clayton Copula is defined as 
\[
C_{\theta, \beta}(u,v) \equiv \Big[ 1+ \big\{ (u^{-\theta} - 1)^\beta + (v^{-\theta} - 1)^{\beta} \big\}^{1/\beta} \Big]^{-1/\theta}, \qquad (u,v)\in[0,1]^2,
\]
to be interpreted as zero if $\min(u,v)=0$. Note that $C_{\theta,\beta}$ is an Archimedean copula with generator $\varphi(t) = \{ (t^{-\theta}-1)/\theta\}^\beta$.
It follows from Theorem 4.1 in \cite{ChaSeg09} that $C_{\theta,\beta}$ is in the copula domain of attraction of the Gumbel--Hougaard copula with shape parameter $\beta$, that is,
\[
C_\infty(u,v) = C_{\beta}(u,v) \equiv \exp\Big[ - \big\{ (- \log u)^\beta + (-\log v)^{\beta} \big\}^{1/\beta} \Big], \qquad (u,v)\in[0,1]^2,
\]
again to be interpreted as zero if  $\min(u,v)=0$.
Moreover, by Proposition 4.3 in \cite{BucSeg14}, Condition~\ref{cond:sobm} is met with  $\rho_\BM = -1$ and
\[
\alpha_\BM(r) = (2r)^{-1} \quad \text{ and } \quad S_\BM(u,v)= \theta\, \Lambda_\BM(u,v;\beta),
\]
where
\begin{align*}
  \Lambda_\BM(u,v;\beta)  & =C_{\beta}(u,v)  \,
  \{ (x^\beta + y^\beta)^{2/\beta} - (x^\beta + y^\beta)^{1/\beta-1} (x^{\beta+1} + y^{\beta+1}) \} \\
  &=
  C_{\beta}(u,v)  L_\beta(x,y)  \,
\Big\{  L_\beta(x,y)  -\frac{x^{\beta+1} + y^{\beta+1}}{x^\beta + y^\beta} \Big\}
\end{align*}
with $x=-\log u$ and  $y = -\log v$. The constant $c_\BM$ in Theorem~\ref{theo:bp} is hence equal to $c_\BM=1$. 

By similar calculations as in the above reference, Condition~\ref{cond:sopot} could be verified from scratch. However, a much simpler calculation shows that $L_\beta$ satisfies the conditions from  Lemma~\ref{lem:gamma} (ii) and hence we obtain
\[
\Gamma_1(\bm x) = \Gamma(\bm x) =  L_\beta(x,y)   \frac{x^{\beta+1} + y^{\beta+1}}{x^\beta + y^\beta}.
\]
We may hence apply Theorem~\ref{theo:bp} to obtain that \ref{cond:sopot} is met with
$\rho_\POT=-1$ and
\[
\alpha_{\POT}(t) = t^{-1} \quad \text{ and } \quad S_{\POT}(x,y)= \frac{1+\theta}{2}\,L_\beta(x,y)  \,
  \Big\{ \frac{x^{\beta+1} + y^{\beta+1}}{x^\beta + y^\beta} -  L_\beta(x,y) \Big\}.
\]

\subsection{Archimax Copulas} \label{subsec:archimax}
Let $L_0$ be an arbitrary stable tail dependence function. Further, let $\psi:[0,\infty) \to [0,1]$ be a $d$-Archimedean generator \citep{McnNes09}, that is, a $d$-monotone function which is strictly decreasing on $[0, \inf\{x \ge 0: \psi(x) = 0\}]$ and satisfies  the conditions $\psi(0)=1$ and  $\lim_{x\to \infty} \psi(x)=0$. Here, $d$-monotonicity means that the first $d-2$ derivatives of $\psi$ exist on $(0,\infty)$ and satisfy $(-1)^j \psi^{(j)} \ge 0$ for all $j=0, \dots, d-2$, with $(-1)^{d-2} \psi^{(d-2)}$ being convex and non-increasing on $(0,\infty)$. In particular, in the bivariate case, $\psi$ must be convex. The Archimax copula associated with $(\psi, L_0)$ is defined as 
\[
C(\bm u) = \psi[ L_0\{ \psi^{-1}(u_1), \dots, \psi^{-1}(u_d) \} ], \quad \bm u \in [0,1]^d,
\]
where $\psi^{-1}$ denotes the inverse of $\psi$ on $(0,1]$ and where $\psi^{-1}(0) = \inf\{x \ge 0: \psi(x) = 0\}$, see \cite{ChaFouGen14}. By Proposition 6.1 in that reference, the copula $C$ is in the max-domain of attraction of the copula
\[
C_\infty(\bm u) = \exp\{ - L_0^\alpha( x_1^{1/\alpha}, \dots, x_d^{1/\alpha}) \}, \quad \bm u \in [0,1]^d,
\]
where $x_j=-\log u_j$, provided that the function $\kappa_{\POT}: w \mapsto 1-\psi(1/w)$ is regularly varying with index $-\alpha$ for some $\alpha \in (0,1]$. Equivalently, the function $\lambda_{\POT}: w \mapsto \psi^{-1}(1-1/w)$ is regularly varying with index $-1/\alpha$, see equation (15) in \cite{ChaFouGen14}. Finally, note that $C$ itself is an extreme-value copula provided $\psi(x)=\exp(-x)$ and that the outer-power transform of the Clayton copula considered in Section~\ref{ex:opc} is an Archimax copula with parameter $\psi(x) = (1+\theta x^{1/\beta})^{-1/\theta}$ (i.e., $\kappa_\POT \in \RV_{-1/\beta}$) and $L_0(x,y) = x+y$.

Define yet another function $\kappa_{\BM}: w \mapsto -\log \psi(1/w)$, and note that
$\kappa_{\POT} \in \RV_{-\alpha}$ if and only if $\kappa_\BM \in \RV_{-\alpha}$. It is further possible to prove that this is equivalent to the function $\lambda_\BM: w \mapsto \psi^{-1}(e^{-1/w})$ being regularly varying with index $-1/\alpha$.\footnote{Indeed, we have $\kappa_b=-\log \psi(1/\cdot) \in \RV_{-\alpha}$ iff $1/(-\log \psi(1/\cdot)) \in \RV_\alpha$. Since the latter function is necessarily strictly increasing, this implies $\{ 1/(-\log \psi(1/\cdot)) \} ^{-1} = 1/\psi^{-1}(e^{-1/\cdot}) \in \RV_{1/\alpha}$, by Proposition B.1.9(9) in \cite{DehFer06}. This implies $\lambda_b \in\RV_{-1/\alpha}$. The other direction is similar. }.
Consider the following second order strengthening for $\M \in \{\BM, \POT\}$: there exists a positive function $B_{\M}$ with $\lim_{t \to\infty} B_{\M}(t) = 0$ and a  function $h_{\kappa,\M}$ which is not of the from $cx^{-\alpha}$ for some $c\in\R$ such that
\begin{align} \label{eq:sophi1}
\lim_{t\to\infty} \frac{1}{B_{\M} (t)}  \Big\{ \frac{\kappa_\M(tx)}{\kappa_\M(t)} - x^{-\alpha}  \Big\} = h_{\kappa,\M}(x)
\end{align}
for all $x>0$.  In that case, by applying Theorem B.2.1 in \cite{DehFer06} to the functions $f(t) = t^\alpha \kappa_\M(t)$ and $a(t)=t^\alpha \kappa_\M(t) B_{\M}(t)$, the limit $h_{\kappa,\M}$ is necessarily of the form 
\begin{align} \label{eq:hkappa}
h_{\kappa,\M}(x) = c_\M x^{-\alpha} \frac{x^{\rho_\M'}-1}{\rho_\M'}
\end{align}
for some constant $c_\M\ne 0$ and $\rho_\M'\le 0$; when $\rho_\M' = 0$ the fraction $(x^{\rho_\M'}-1)/{\rho_\M'}$ should be interpreted as $\log x$. Moreover, $B_{\M}$ is regularly varying with index $\rho'_\M$. As an example, consider the continuously differentiable $2$-Archimedean generator
\[
\psi(x) = (1-x+x^2/2)\bm 1(x \in [0,1/2]) + (7/8 -x/2) \bm 1(x \in  [1/2, 14/8]),
\]
such that $\kappa_\BM, \kappa_\POT \in\RV_{-1}$, i.e, $\alpha=1$. In that case, it can be shown that \eqref{eq:sophi1} is met for $\M = \BM, \POT$ with
\[
B_{\BM}(t) = t^{-2}, \quad \rho_\BM' = -2 \quad\text{ and }\quad B_{\POT}(t) = t^{-1}, \quad \rho_\POT' = -1.
\]

\begin{proposition} \label{prop:archso}
Suppose that, for $j=1, \dots, d$, the $j$th first-order partial derivative $\dot L_{0,j}$ of $L_0$ exists and is continuous on  $\{\bm x \in [0,\infty)^d: x_j>0\}$. For completeness, define
\[
\dot L_{0,j}(\bm x) = \limsup_{h \downarrow 0} \frac{L_0(\bm x +h \bm e_j)-L_0(\bm x)}{h} \in[0,1]
\]
for $\bm x$ such that $x_j=0$. If \eqref{eq:sophi1} is met for $\M = \POT$ and with $\rho_{\POT}'<0$, then \ref{cond:sopot} is met with $\alpha_\POT(t) = B_{\POT}(t^{1/\alpha}) \in \RV_{\rho_{\POT}'/\alpha}$. Similarly, if \eqref{eq:sophi1} is met for $\M= \BM$ and with $\rho_\BM'<0$, then \ref{cond:sobm} is met with $\alpha_\BM(t) = B_{\BM}(t^{1/\alpha}) \in \RV_{\rho_{\BM}'/\alpha} $. 
\end{proposition}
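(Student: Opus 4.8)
The plan is to prove the \POT{} statement in detail; the \BM{} statement follows along identical lines with $\kappa_\BM$ in place of $\kappa_\POT$ plus one extra outer Taylor expansion. Throughout write $x_j=-\log u_j$ and abbreviate $\bm x^{1/\alpha}=(x_1^{1/\alpha},\dots,x_d^{1/\alpha})$, so that the target first-order limit is $L(\bm x)=L_0^\alpha(\bm x^{1/\alpha})$. The first step is a purely algebraic reduction exploiting two identities that come for free from the Archimax structure. Since $1-\psi(y)=\kappa_\POT(1/y)$ and $\psi^{-1}(1-x_j/t)=\lambda_\POT(t/x_j)$, setting $w_j=\lambda_\POT(t/x_j)$ gives
\[
t\{1-C(\bm 1-\bm x/t)\}=t\,\kappa_\POT\Big(\tfrac{1}{L_0(\bm w)}\Big),\qquad \bm w=(w_1,\dots,w_d),
\]
while the definition of $\lambda_\POT$ yields the exact identity $t\,\kappa_\POT(1/\lambda_\POT(t))=1$. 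This suggests reparametrising by $s=1/\lambda_\POT(t)$ (equivalently $\kappa_\POT(s)=1/t$), with $s\to\infty$ as $t\to\infty$, so that, writing $g=g(t,\bm x)=L_0(\bm w)/\lambda_\POT(t)$ and using homogeneity of $L_0$,
\[
t\{1-C(\bm 1-\bm x/t)\}=\frac{\kappa_\POT(s/g)}{\kappa_\POT(s)}.
\]
The advantage is that both the inner and the outer perturbation can then be read off from the single condition \eqref{eq:sophi1} on $\kappa_\POT$, with no separate inversion result needed.

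Next I would carry out two coupled second-order expansions. Regular variation of $\kappa_\POT$ gives $g\to g_0:=L_0(\bm x^{1/\alpha})$ and hence $L(\bm x)=g_0^\alpha$, confirming consistency. For the inner relation, note $\kappa_\POT(1/w_j)=x_j/t=x_j\kappa_\POT(s)$; inserting $1/w_j=c_j s$ into \eqref{eq:sophi1} and expanding gives $c_j=x_j^{-1/\alpha}+O(B_\POT(s))$ with an explicit first-order correction driven by $h_{\kappa,\POT}(x_j^{-1/\alpha})$, so that $w_j/\lambda_\POT(t)=x_j^{1/\alpha}+O(B_\POT(s))$ with explicit correction. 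Linearising $L_0$ through the partial derivatives $\dot L_{0,j}$ (this is where the differentiability hypothesis on $L_0$ enters) turns this into $g=g_0+B_\POT(s)\,G(\bm x)+o(B_\POT(s))$. Applying \eqref{eq:sophi1} a second time to the outer ratio $\kappa_\POT(s/g)/\kappa_\POT(s)$ and combining the two contributions yields
\[
t\{1-C(\bm 1-\bm x/t)\}=L(\bm x)+B_\POT(s)\,S_\POT(\bm x)+o(B_\POT(s)),
\]
where $S_\POT$ is an explicit combination of $h_{\kappa,\POT}$ evaluated at $1/g_0$ and at the $x_j^{-1/\alpha}$, weighted by $\dot L_{0,j}(\bm x^{1/\alpha})$. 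Homogeneity of order $1-\rho_\POT'/\alpha$ is then automatic from Lemma~\ref{lem:reg}(i), and non-nullity of $S_\POT$ follows from the assumption that $h_{\kappa,\POT}$ is not of the form $c\,x^{-\alpha}$.

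It remains to identify $B_\POT(s)=B_\POT(1/\lambda_\POT(t))$ with the advertised rate $\alpha_\POT(t)=B_\POT(t^{1/\alpha})$, and this is exactly where the hypothesis $\rho_\POT'<0$ is essential. Writing $\kappa_\POT(s)=s^{-\alpha}\ell(s)$ with $\ell$ slowly varying, condition \eqref{eq:sophi1} is the second-order condition for $\ell$ with a regularly varying auxiliary function of index $\rho_\POT'<0$; since $B_\POT\in\RV_{\rho_\POT'}$ is then integrable at infinity against $\db s/s$, the integral in the Karamata representation of $\ell$ converges and $\ell(s)\to c_\infty\in(0,\infty)$. Inverting $\kappa_\POT(s)=1/t$ gives $1/\lambda_\POT(t)\sim c_\infty^{1/\alpha}t^{1/\alpha}$, whence $B_\POT(1/\lambda_\POT(t))\sim c_\infty^{\rho_\POT'/\alpha}B_\POT(t^{1/\alpha})$ by regular variation, the constant being absorbed into $S_\POT$. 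Finally, upgrading the pointwise statement to uniform convergence on $[0,T]^d$ uses Potter bounds for $\kappa_\POT$ and $B_\POT$ together with uniform continuity of each $\dot L_{0,j}$ on compact subsets of $\{x_j>0\}$; coordinates with small or vanishing $x_j$ are controlled by $0\le\dot L_{0,j}\le1$ and by checking that $x_j^{1/\alpha-1}h_{\kappa,\POT}(x_j^{-1/\alpha})=c_\POT x_j^{1/\alpha}(x_j^{-\rho_\POT'/\alpha}-1)/\rho_\POT'\to0$ as $x_j\downarrow0$, the boundary value matching the $\limsup$-convention defining $\dot L_{0,j}$ there.

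I expect the main obstacle to be the bookkeeping of the two coupled second-order expansions (the inner inversion of $\kappa_\POT$ and its outer evaluation) and, above all, the rate-matching step: one must use the sign condition $\rho_\POT'<0$ to force the slowly varying part of $\kappa_\POT$ to converge, so that $B_\POT(1/\lambda_\POT(t))$ and $B_\POT(t^{1/\alpha})$ agree up to a constant; without $\rho_\POT'<0$ the two differ by a genuinely non-convergent slowly varying factor and $S_\POT$ would fail to exist. The \BM{} case is treated identically using $\kappa_\BM(w)=-\log\psi(1/w)$, $\lambda_\BM(w)=\psi^{-1}(e^{-1/w})$ and the identity $r\,\kappa_\BM(1/\lambda_\BM(r))=1$; the only additional ingredient is a Taylor expansion of the exponential in $C(\bm u^{1/r})^r-C_\infty(\bm u)=\exp\{-r\kappa_\BM(1/L_0(\bm w))\}-\exp\{-L(\bm x)\}$, which produces the multiplicative factor $C_\infty(\bm u)$ in $S_\BM$ and delivers uniformity on $[\delta,1]^d$.
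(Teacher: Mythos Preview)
Your approach is essentially the same as the paper's: the same algebraic reduction to $\kappa_\POT(s/g)/\kappa_\POT(s)$ with $s=1/\lambda_\POT(t)$, the same two coupled second-order expansions (outer via \eqref{eq:sophi1}, inner via a second-order expansion of $\lambda_\POT(t/x_j)/\lambda_\POT(t)$), and the same rate-matching step using $\rho_\POT'<0$ to force $t^\alpha\kappa_\POT(t)\to C_\POT\in(0,\infty)$ so that $B_\POT(1/\lambda_\POT(t))\sim C_\POT^{\rho_\POT'/\alpha}B_\POT(t^{1/\alpha})$. The only organisational difference is that the paper isolates the inner expansion and the rate identification as a separate lemma (Lemma~\ref{lem:inv_short}), deriving $\lambda_\M(xt)/\lambda_\M(t)=x^{-1/\alpha}+h_{\lambda,\M}(x)B_\M(t^{1/\alpha})+o(\cdot)$ uniformly on $[\eps,\infty)$ once and for all; your inline inversion of $\kappa_\POT(1/w_j)=x_j\kappa_\POT(s)$ amounts to reproving this on the fly. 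For uniformity near $\bm x=\bm 0$ the paper does not linearise $L_0$ coordinate-wise as you suggest but instead bounds $|L_0^\alpha(\lambda_\POT(t/\bm x)/\lambda_\POT(t))-L_0^\alpha(\bm x^{1/\alpha})|$ directly via the Lipschitz property of $\bm x\mapsto L_0^\alpha(\bm x^{1/\alpha})$; your coordinate-wise control would also work but requires slightly more care with the $o$-terms when several $x_j$ are simultaneously small.
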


\begin{proof}
The proof heavily relies on Lemma~\ref{lem:inv_short} below.
Slightly abusing notation, we write $f(\bm x) = (f(x_1), \dots, f(x_d))$, provided $f$ is a real-valued function on a subset of the extended real line.  {Depending on the context, all convergences are for $t\to \infty$ or $r\to\infty$, if not mentioned otherwise.}

Consider the case $\M=\POT$ first.
Using the homogeneity of $L_0$ and the fact that the convergence in \eqref{eq:sophi1} 
is uniform on sets $[\eps,\infty)$ (Lemma~\ref{lem:inv_short} below), we may write, using the notation $C_\POT$ defined in Lemma~\ref{lem:inv_short},
\begin{align}
\label{eq:decall1}
t\{ 1- C(1-\bm x/t) \} 
&= \nonumber
	\kappa_\POT\Big( \frac{1}{L_0( \lambda_\POT(t/\bm x) ) } \Big) 
	\Big /
	\kappa_\POT\Big( \frac{1}{ \lambda_\POT(t) } \Big) 
\\
&\stackrel{(a)}{=} \nonumber
	\kappa_\POT\bigg(\frac{1}{\lambda_\POT(t) }
		\frac{1}{L_0\big( \frac{\lambda_\POT(t/\bm x) }{\lambda_\POT(t) } \big) } \bigg) 
	\Big/
	\kappa_\POT\Big( \frac{1}{ \lambda_\POT(t) } \Big)
\\
&\stackrel{(b)}{=} \nonumber
	L_0^\alpha\Big( \frac{\lambda_\POT(t/\bm x) }{\lambda_\POT(t) } \Big) 
	+ 
	\bigg\{ h_{\kappa,\POT}\bigg(  
		\frac{1}{L_0\big( \frac{\lambda_\POT(t/\bm x) }{\lambda_\POT(t) } \big) } 
		\bigg) +o(1) \bigg\} 
	B_{\POT}\Big( \frac{1}{ \lambda_\POT(t) } \Big)  \\
&\stackrel{(c)}{=} \nonumber
	L_0^\alpha\Big( \frac{\lambda_\POT(t/\bm x) }{\lambda_\POT(t) } \Big) 
	+ 
	\bigg\{ h_{\kappa,\POT}\bigg(  
		\frac{1}{L_0\big(\bm x^{1/\alpha} \big) } 
		\bigg) +o(1) \bigg\} 
	B_{\POT}\Big( \frac{1}{ \lambda_\POT(t) } \Big) 
	\\
&\stackrel{(d)}{=} 
	L_0^\alpha\Big( \frac{\lambda_\POT(t/\bm x) }{\lambda_\POT(t) } \Big) 
	+
	\bigg\{ h_{\kappa,\POT}\bigg(  
		\frac{1}{L_0\big(\bm x^{1/\alpha} \big) } 
		\bigg) +o(1) \bigg\} C_\POT^{\rho'/\alpha}B_{\POT}(t^{1/\alpha})
\end{align}
where all $o$-terms are uniform in $\bm x\in[0,T]^d$, and where $1/0$ is interpreted as $\infty$ and converging functions on $(0,\infty)$ are naturally extended to $(0,\infty]$. Explanations: (a) follows from homogeneity of $L_0$, (b) from the fact that the convergence in \eqref{eq:sophi1} in uniform on $[\eps,\infty)$ by Lemma~\ref{lem:inv_short}, (c) from uniform continuity of $h_{\kappa,\POT}$ on $[\eps,\infty]$ and continuity of $L_0$, and (d) from \eqref{eq:B1lambda} in the proof of Lemma~\ref{lem:inv_short}. 

We are next going to show that the first summand on the right-hand side of \eqref{eq:decall1} can be written as 
\begin{align} \label{eq:con1}
L_0^\alpha\Big( \frac{\lambda_\POT(t/\bm x) }{\lambda_\POT(t) } \Big) 
= 
L_0^\alpha(\bm x^{1/\alpha})  + B_{\POT}(t^{1/\alpha}) \big\{ S_{\POT,1}(\bm x) + o(1) \big\},   
\end{align}
where the $o$-term is uniform in $\bm x\in[0,T]^d$ and where
\begin{align*}
S_{\POT,1}(\bm x) =  \alpha L_0^{\alpha-1}(\bm x^{1/\alpha})  \sum_{j=1}^d   \dot L_{0,j}(\bm x^{1/\alpha}) h_{\lambda, \POT}(x_j^{-1}) 
\end{align*}
with $S_{\POT,1}(\bm 0)$  being interpreted as $0$ and {$h_{\lambda, \POT}$ being defined in \eqref{eq:lambdalim} in Lemma~\ref{lem:inv_short} below.}

For that purpose, let $\eps>0$, and note that we may find $\delta>0$ such that $|S_{p,1}(\bm x)| \le \eps/2$ for all $\bm x \in [0, \delta]^d$. Further, note that $\bm x \mapsto L_0^\alpha(\bm x^{1/\alpha})$ is Lipschitz-continuous. {Indeed, using that 
\[
z_1^\alpha - z_2^\alpha = \alpha (z_1 -z_2) \int_0^1 \{s z_1 + (1-s) z_2\}^{\alpha-1} \diff s
\] 
for all $z_1, z_2 \ge 0$ with $z_1z_2\ne 0$, we have, for all $\bm x, \bm y$ not both being equal to $\bm 0$,
\begin{align*}
|L^\alpha(\bm x^{1/\alpha}) - L^\alpha(\bm y^{1/\alpha})| 
&=  \textstyle
\alpha |L(\bm x^{1/\alpha}) - L(\bm y^{1/\alpha})|  \int_0^1 \{s L(\bm x^{1/\alpha}) + (1-s) L(\bm y^{1/\alpha})\}^{\alpha-1} \diff s \\
&\le  \textstyle
\alpha \sum_{j: x_j y_j >0}|x_j^{1/\alpha} - y_j^{1/\alpha}|   \int_0^1 \{s x_j^{1/\alpha} + (1-s)y_j^{1/\alpha}\}^{\alpha-1} \diff s   \\
&=  \textstyle \sum_{j=1}^d |x_j - y_j|,
\end{align*}
where we used Lipschitz-continuity of $L$, $L(\bm x) \ge \max\{x_1, \dots, x_d\}$ and $\alpha \le 1$.}

As a consequence, we further find that
\begin{align*}
\frac{1}{B_{\POT}(t^{1/\alpha})} \Big| L_0^\alpha\Big( \frac{\lambda_\POT(t/\bm x) }{\lambda_\POT(t) } \Big) 
- L_0^\alpha(\bm x^{1/\alpha}) \Big| 
\le 
\sum_{j: x_j>0} \frac{1}{B_{\POT}(t^{1/\alpha})}  \Big| \Big( \frac{\lambda_\POT(t/x_j) }{\lambda_\POT(t) } \Big)^{\alpha} - (x_j^{1/\alpha})^\alpha \Big|
\end{align*}
and each summand on the right-hand side can be written as 
\begin{multline*}
\frac{\alpha}{B_{\POT}(t^{1/\alpha})}  \Big| \frac{\lambda_\POT(t/x_j) }{\lambda_\POT(t) } - x_j^{1/\alpha} \Big| \times \int_0^1 \Big\{ s  \frac{\lambda_\POT(t/x_j) }{\lambda_\POT(t) } + (1-s) x_j^{1/\alpha}\Big\}^{\alpha-1} \diff s \\
\le 
\frac{\alpha}{B_{\POT}(t^{1/\alpha})}  \Big| \frac{\lambda_\POT(t/x_j) }{\lambda_\POT(t) } - x_j^{1/\alpha} \Big| \times x_j^{1-1/\alpha}  \times  \int_0^1 (1-s)^{\alpha-1} \diff s
\end{multline*}
where we have used that $\lambda_\POT(t/x_j) / \lambda_\POT(t) \ge 0$ and $\alpha\le 1$. By \eqref{eq:lambdalim} below, the right-hand side of the previous display can be written as
\[
O(1) \times \alpha x_j \times \int_0^1 (1-s)^{\alpha-1} \diff s
\]
where the $O(1)$ is uniform for $x_j \in (0,\delta]$. Hence, for sufficiently large $t$, this expression can be made smaller than $\eps/2$ uniformly in $x_j\in(0,\delta]^d$ by decreasing $\delta>0$ if necessary.  
As a consequence, it remains to show that \eqref{eq:con1} holds uniformly in $\bm x \in [0,T]^d \setminus [0,\delta]^d$. For such $\bm x$, using the notation from Lemma~\ref{lem:inv_short} below, 
\begin{align*}
&\,
L_0^\alpha\Big( \frac{\lambda_\POT(t/\bm x) }{\lambda_\POT(t) } \Big)  \\
\stackrel{(a)}{=}&\, 
L_0^\alpha\Big( 
		\bm x^{1/\alpha} 
		+ 
		\big\{ h_{\lambda, \POT}( \bm x^{-1}) + o(1) \big\}
		B_{\POT}(t^{1/\alpha}) 
		\Big)   \\
\stackrel{(b)}{=} &\,
L_0^\alpha(\bm x^{1/\alpha}) 
+ 
\alpha L_0^{\alpha-1}(\bm x^{1/\alpha}+o(1))  
\sum_{j=1}^d  \dot L_{0,j}(\bm x^{1/\alpha}+o(1)) 
		\big\{ h_{\lambda, \POT}( x_j^{-1}) + o(1) \big\}
		B_{\POT}(t^{1/\alpha}) \\
\stackrel{(c)}{=} &\, L_0^\alpha(\bm x^{1/\alpha})  
+ 
\{ S_{\POT,1}(\bm x) + o(1)\} B_{\POT}(t^{1/\alpha}),
\end{align*}
where the $o(1)$ terms in each line are uniform in $\bm x \in [0,T]^d \setminus [0,\delta]^d$. Explanations: (a) follows from  \eqref{eq:lambdalim} below, (b) from a Taylor expansion and (c) from uniform continuity of $\dot L_{0,j}$ on $[0,T]^d \setminus [0,\delta]^d$.

It thus follows from \eqref{eq:decall1} and \eqref{eq:con1} that \ref{cond:sopot} is met with
\[
S_\POT(\bm x) =  
C_\POT^{\rho'/\alpha}  h_{\kappa,\POT} \big(1/{L_0 \big( \bm x^{1/\alpha}  \big)} \big)
+
S_{\POT,1}(\bm x)\footnote{Note that, by Lemma~\ref{lem:reg}, the function $S_\POT$ must be homogeneous of order $1-\rho_\POT'/\alpha$. This can also be verified by some lengthy calculations, using the fact that $h_{\kappa, \POT}(sx) = s^{\rho_\POT'-\alpha}h_{\kappa, \POT}(x)+x^{-\alpha}h_{\kappa, \POT}(s)$ and $L_0(\bm x) = \sum_{j=1}^d x_j \dot{L}_{0,j}(\bm x) $; the latter follows since $L_0$ is homogenous of order $1$.}.
\]

 Next, let $\M=\BM$.  Note that is sufficient to show that the convergence in \eqref{eq:sobm2} is uniform on $[\delta, 1]^d$. For $\bm u\in [\delta,1]^d$, let $\bm x=-\log \bm u \in [0,|\log \delta|]^d$. Then, similarly as before,
\begin{align*}
- r \log C(\bm u^{1/r}) 
&=
	\kappa_\BM\bigg(\frac{1}{\lambda_\BM(r) }
		\frac{1}{L_0\big( \frac{\lambda_\BM(r/\bm x) }{\lambda_\BM(r) } \big) } \bigg) 
	\Big/
	\kappa_\BM\Big( \frac{1}{ \lambda_\BM(r) } \Big)
\\
&= 
L_0^\alpha\Big( \frac{\lambda_\BM(r/\bm x) }{\lambda_\BM(r) } \Big) 
	+
	\bigg\{ h_{\kappa,\BM}\bigg(  
		\frac{1}{L_0\big(\bm x^{1/\alpha} \big) } 
		\bigg) +o(1) \bigg\} C_\BM^{\rho'/\alpha}B_{\BM}(r^{1/\alpha})
\end{align*}
for $r\to\infty$, where all $o$-terms are uniform in $\bm u\in[\delta, 1]^d$, and where $1/0$ is interpreted as $\infty$ and converging functions on $(0,\infty)$ are naturally extended to $(0,\infty]$. The proof is now the same as for $m=\POT$.
\end{proof}

\begin{lemma}
\label{lem:inv_short}
Let $\M \in \{\POT, \BM\}$. Suppose that $\kappa_\M$ satisfies \eqref{eq:sophi1} for some $B_{\M}\in \RV_{\rho_\M'}$ with $\rho_\M'<0$ and denote by $c_\M \neq 0$ the constant in~\eqref{eq:hkappa}. Then the convergence in \eqref{eq:sophi1} is uniform on sets of the form $[\eps,\infty)$ for $\eps > 0$. Moreover, there exists a constant $C_\M>0$ such that, for $t\to\infty$,
\begin{align} \label{eq:lambdarepr}
\lambda_\M(t) =  (C_\M t)^{-{1/\alpha}}\Big\{1 - \frac{1}{\alpha}\frac{c_\M}{\rho_\M'} C_\M^{\rho_\M'/\alpha}  B_{\M}(t^{1/\alpha}) (1+o(1))\Big\}.
\end{align}
Finally,
\begin{align} \label{eq:lambdalim}
\lim_{t \to \infty} \frac{1}{B_{\M}(t^{1/\alpha})}  \Big\{ \frac{\lambda_\M(xt)}{\lambda_\M(t)} - x^{-1/\alpha} \Big\}  
=  
-c_\M \alpha^{-2} C_\M^{\rho'/\alpha} x^{-1/\alpha} \frac{x^{\rho_\M'/\alpha}-1}{\rho_\M'/\alpha} 
=: 
h_{\lambda,\M}(x)
\end{align}
with the latter convergence being uniform on sets of the form $[\eps,\infty)$, $\eps > 0$.
\end{lemma}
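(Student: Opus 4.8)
The plan is to regard $\lambda_\M$ as the asymptotic inverse of $\kappa_\M$ and to push the second-order behaviour of $\kappa_\M$ recorded in \eqref{eq:sophi1}--\eqref{eq:hkappa} through this inversion. The key starting point is the exact identity
\[
\kappa_\M\Big(\frac{1}{\lambda_\M(t)}\Big) = \frac1t, \qquad \M\in\{\POT,\BM\},
\]
valid for all sufficiently large $t$: for $\M=\POT$ it reads $1-\psi(\psi^{-1}(1-1/t)) = 1/t$ and for $\M=\BM$ it reads $-\log\psi(\psi^{-1}(\e^{-1/t})) = 1/t$, both being immediate from the definitions of $\kappa_\M,\lambda_\M$ in terms of $\psi,\psi^{-1}$. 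Hence $1/\lambda_\M = \kappa_\M^{\leftarrow}(1/\cdot)$, $\lambda_\M$ is eventually monotone, and the entire lemma reduces to an explicit asymptotic inversion of the second-order regularly varying function $\kappa_\M$.

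For the uniformity of \eqref{eq:sophi1}, I would first recast the condition in terms of $f(t)=t^\alpha\kappa_\M(t)$. A one-line computation gives
\[
\frac{1}{B_\M(t)}\Big(\frac{\kappa_\M(tx)}{\kappa_\M(t)}-x^{-\alpha}\Big) = x^{-\alpha}\,\frac{f(tx)-f(t)}{a(t)}, \qquad a(t)=f(t)B_\M(t)\in\RV_{\rho_\M'},
\]
so that \eqref{eq:sophi1} is precisely the statement that $f$ is of second-order variation with auxiliary function $a$ and limit $c_\M(x^{\rho_\M'}-1)/\rho_\M'$. Since $\kappa_\M$ is monotone and $\rho_\M'<0$, the second-order uniform convergence theorem (e.g., Theorem~2.3.9 in \citealp{DehFer06}) applies and furnishes, for every tolerance $\eta>0$ and small $\delta>0$, a bound
\[
\Big| \frac{1}{B_\M(t)}\Big(\frac{\kappa_\M(tx)}{\kappa_\M(t)}-x^{-\alpha}\Big) - h_{\kappa,\M}(x)\Big| \le \eta\, x^{-\alpha+\rho_\M'}\max(x^{\delta},x^{-\delta})
\]
for all large $t,tx$. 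Because $-\alpha+\rho_\M'<0$, the supremum of the right-hand side over $x\in[\eps,\infty)$ is finite, whence the convergence is uniform there.

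The representation \eqref{eq:lambdarepr} is the heart of the argument. Applying Theorem~B.2.1 in \cite{DehFer06} to $f$, exactly as in the discussion preceding Proposition~\ref{prop:archso}, yields $f(t)\to C_\M:=\lim_{t\to\infty}t^\alpha\kappa_\M(t)\in(0,\infty)$ together with $f(t)=C_\M+(c_\M/\rho_\M')a(t)(1+o(1))$; since $a(t)=C_\M B_\M(t)(1+o(1))$ this gives the pointwise expansion
\[
\kappa_\M(t)=C_\M t^{-\alpha}\Big(1+\frac{c_\M}{\rho_\M'}B_\M(t)(1+o(1))\Big).
\]
I then invert by bootstrapping: writing $w=1/\lambda_\M(t)$ and solving $\kappa_\M(w)=1/t$, the leading order gives $w=(C_\M t)^{1/\alpha}(1+o(1))$; substituting this back and invoking regular variation of $B_\M$ produces the intermediate relation
\begin{equation}\label{eq:B1lambda}
B_\M\Big(\frac{1}{\lambda_\M(t)}\Big)=C_\M^{\rho_\M'/\alpha}B_\M(t^{1/\alpha})(1+o(1)),
\end{equation}
and solving for the second-order correction in the displayed expansion of $\kappa_\M$ delivers \eqref{eq:lambdarepr} with the stated constant. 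Finally, \eqref{eq:lambdalim} follows by inserting \eqref{eq:lambdarepr} into $\lambda_\M(xt)/\lambda_\M(t)$, cancelling the factor $x^{-1/\alpha}$, using $B_\M(x^{1/\alpha}t^{1/\alpha})/B_\M(t^{1/\alpha})\to x^{\rho_\M'/\alpha}$, and linearising in $B_\M(t^{1/\alpha})\to0$; a short computation identifies the constant as $h_{\lambda,\M}$.

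I expect the genuine difficulty to lie in the two uniformity statements over the unbounded set $[\eps,\infty)$: the pointwise limits are routine, but controlling the errors as $x\to\infty$ requires true Potter-type bounds rather than mere continuity of the limit. Monotonicity of $\kappa_\M$ (hence of $\lambda_\M$) together with $\rho_\M'<0$ is exactly what makes the uniform second-order convergence theorem applicable and forces the error terms to decay at infinity. A secondary subtlety is the rigour of the inversion---ensuring that substituting the leading term $w\sim(C_\M t)^{1/\alpha}$ into the slowly varying correction does not disturb the order of the remainder---which is handled by the uniform convergence theorem for regularly varying functions applied to $B_\M$, the same ingredient underlying \eqref{eq:B1lambda}.
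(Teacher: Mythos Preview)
Your proposal is correct and follows essentially the same route as the paper: recast \eqref{eq:sophi1} via $f(t)=t^\alpha\kappa_\M(t)$, apply the second-order regular variation machinery of \cite{DehFer06} (the paper uses Theorems~B.2.2 and~B.2.18, you cite Theorem~2.3.9, but these are the same Drees-type bounds) to obtain the expansion of $\kappa_\M$ and the uniformity, then invert through the identity $\kappa_\M(1/\lambda_\M(t))=1/t$ by the two-step bootstrap you describe, including the intermediate relation \eqref{eq:B1lambda}. The only point you leave somewhat implicit is the uniformity of \eqref{eq:lambdalim}, which the paper handles by applying the same extended-regular-variation framework to $t^{1/\alpha}\lambda_\M(t)$ and invoking Theorem~B.2.18 once more; your identification of Potter-type bounds as the right tool is on target.
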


\begin{proof}
The proof of uniformity in \eqref{eq:sophi1} is similar to the proof of uniformity in \eqref{eq:lambdalim} and is omitted for the sake of brevity. Since the proof is the same for $\M=\POT$ and $\M=\BM$, we occasionally omit the index $\M$. Recall from the discussion following~\eqref{eq:sophi1} that the functions $f(t) = t^\alpha \kappa(t)$ and $a(t)=t^\alpha \kappa(t) B(t)$ satisfy
\[
\frac{f(tx)-f(t)}{a(t)} \to c \frac{x^{\rho'}-1}{\rho'}
\]
where by assumption $\rho' < 0$ and $c \neq 0$. Assume without loss of generality that $c>0$, otherwise replace $f$ by $-f$ in the arguments that follow and make corresponding adjustments. By part 2 of Theorem B.2.2 in \cite{DehFer06} the limit $C_\M := C := \lim_{t \to \infty} f(t)$ exists and we have
\[
C - t^\alpha \kappa(t) =  - \frac{c}{\rho'}t^\alpha \kappa(t) B(t)(1+o(1)),
\] 
which implies 
\[
\kappa(t) = C t^{-\alpha}  \big\{ 1 + \frac{c}{\rho'}B(t)(1+o(1))\big\}.
\]
Note that $C \geq 0$ since $\kappa(t) \geq 0$ and that $C \neq 0$ since otherwise Theorem~B.2.2 in \cite{DehFer06} would imply $-1/B(t) = - f(t)/a(t) \to -c/\rho'$, which contradicts $B(t) = o(1)$. Next note that $1/t=\kappa(1/\lambda(t))$ for all sufficiently large $t$. Hence, by the previous display,
\begin{align}\label{eq:helplambda}
t^{-1} = \lambda(t)^{\alpha}C\big\{ 1 + \frac{c}{\rho'}B(1/\lambda(t))(1+o(1))\big\}.
\end{align}
Since $\lambda(t)=o(1)$ (by definition of $\lambda$), we first obtain $t^{-1} = \lambda(t)^{\alpha}C(1+o(1))$, i.e. $\lambda(t) = (t C)^{-1/\alpha}(1+o(1))$. By regular variation of $B$ combined with the Uniform Convergence Theorem (see Theorem B.1.4 in \citealp{DehFer06}) we obtain
\begin{align} \label{eq:B1lambda}
B_\M(1/\lambda_\M(t)) = B_\M(t^{1/\alpha})C_\M^{\rho_\M'/\alpha}(1+o(1)),
\end{align}
{where we added the indices for ease of reference.}
After plugging this into~\eqref{eq:helplambda} and rearranging terms, a Taylor expansion implies that
\begin{align*}
\lambda(t) 
&= 
(C t)^{-1/\alpha}  \big\{ 1 + \frac{c}{\rho'}C^{\rho'/\alpha}B(t^{1/\alpha})(1+o(1) \big\}^{-1/\alpha} 
\\
&=
(C t)^{-1/\alpha} \big\{ 1  - \frac{c}{\alpha \rho'}C^{\rho'/\alpha}B(t^{1/\alpha})(1+o(1)\big\},
\end{align*}
and hence~\eqref{eq:lambdarepr} follows. The fact that~\eqref{eq:lambdalim} holds for any fixed $x$ follows from~\eqref{eq:lambdarepr} by straightforward calculations. 

Hence it remains to prove that the convergence in~\eqref{eq:lambdalim} is actually uniform. Letting $f(t) := -c \alpha^{-2} C^{\rho'/\alpha}t^{1/\alpha}\lambda(t)$ and $a(t) := t^{1/\alpha}\lambda(t)B(t^{1/\alpha})$, the claim can be equivalently formulated as 
\[
\lim_{t\to\infty} \frac{f(tx)-f(t)}{a(t)} = \frac{x^{\rho'/\alpha}-1}{\rho'/\alpha}
\]
uniformly in $x\in[\eps, \infty)$. It follows from the properties of $\lambda$ established earlier that $f(\infty) = \lim_{t\to\infty} f(t) = -c \alpha^{-2} C^{(\rho'-1)/\alpha}$ exists. Let $a_0(t) = |\rho'/\alpha|\{f(\infty) - f(t)\}$ and note that $\lim_{t\to\infty} a_0(t)/a(t) = 1$ by Theorem B.2.2 in \cite{DehFer06}. Moreover, by Theorem B.2.18 in that reference, we can find, for any $\delta>0$, some $t_0>0$ such that, for all $t\ge t_0$,
\[
\sup_{x\ge \eps} \Big| \frac{f(tx)-f(t)}{a_0(t)} - \frac{x^{\rho'/\alpha}-1}{\rho'/\alpha} \Big| \le  \delta \sup_{x\ge \eps}  \max\{x^{\rho'/(2\alpha)}, x^{3\rho'/(2\alpha)}\} = \delta \eps^{3\rho'/(2\alpha)}.
\]
In other words,
\[
\lim_{t\to\infty} \sup_{x\ge \eps} \Big| \frac{f(tx)-f(t)}{a_0(t)} - \frac{x^{\rho'/\alpha}-1}{\rho'/\alpha} \Big| = 0.
\]
The decomposition
\[
\frac{f(tx)-f(t)}{a(t)}  - \frac{x^{\rho'/\alpha}-1}{\rho'/\alpha} 
= 
\frac{a_0(t)}{a(t)}  \Big\{ \frac{f(tx)-f(t)}{a_0(t)}  - \frac{x^{\rho'/\alpha}-1}{\rho'/\alpha}  \Big\} + 
\Big\{ \frac{a_0(t)}{a(t)} - 1 \Big\}  \frac{x^{\rho'/\alpha}-1}{\rho'/\alpha}
\]
then implies the assertion.  
\end{proof}

\section{Consequences for estimation precision} \label{sec:est}

In this section, we illustrate the consequences of the results in the previous sections for estimating multivariate extremal dependence. Suppose $\bm X_1, \dots, \bm X_n$ is a finite sample from a distribution with continuous marginal cdfs and copula $C$ such that \eqref{eq:dombm} or, equivalently, \eqref{eq:dompot} is met. For the sake of a clear exposition, we will concentrate on the bivariate case and on estimating the Pickands dependence function $A(t)=L(1-t,t)$ based on one estimation method from the BM and POT approach, respectively. 

For the POT approach, we concentrate on the empirical stable tail dependence function, evaluated at the point $(1-t,t)$, defined as
\[
\hat A_\POT(t) =  \frac1k \sum_{i=1}^n \ind(\hat F_{n,1}(X_{i,1}) > 1 - (1-t) \tfrac{k}{n} \text{ or } \hat F_{n,2}(X_{i,2}) > 1- t \tfrac{k}{n}),
\]
where $\hat F_{n,j}$ denotes the $j$th marginal empirical cdf, for $j=1,2$. Let
\[
A_n(t) = \frac{1-C(1-(1-t)k/n, 1-t k/n )}{k/n} = \frac{\Prob(U_1 > 1-(1-t)k/n \text{ or } U_2 >1-t k/n )}{k/n}
\]
denote the pre-asymptotic version of $A(t)$, where $(U_1, U_2) \sim C$. The asymptotic analysis of $\hat A_\POT$ is based on the bias-variance type decomposition 
\[
\sqrt k \{ \hat A_\POT(t) - A(t) \} 
=
\sqrt k \{ \hat A_\POT(t) - A_n(t)\}  + \sqrt k \{ A_n(t) - A(t)\}.
\]
First, it follows from simple adaptations of the results in \cite{Hua92} that
\[
\sqrt k \{ \hat A_\POT(t) - A_n(t)\}
\]
is asymptotically centered normal (in fact, even functional weak convergence holds), provided that $k\to\infty$ and $k=o(n)$ as $n\to\infty$ and that some mild regularity conditions on $L$ are met. Moreover, if \ref{cond:sopot} is met, the dominating bias term satisfies
\[
\sqrt k \{ A_n(t) - A(t)\} 
=
\sqrt k \alpha_\POT(n/k) \frac{A_n(t) - A(t)}{\alpha_p(n/k)} = 
\sqrt k \alpha_\POT(n/k) \{ S_\POT(1-t, t) + o(1)\}
\]
as $n\to\infty$. Hence, in the typical case $\alpha_\POT(t) = c t^{\rho_\POT}$ for some $\rho_\POT<0$, choosing $k$ proportional to $n^{-2\rho_\POT/(1-2\rho_\POT)}$ leads to the best convergence rate for this estimator, with the resulting rate being of order $n^{\rho_\POT/(1-2\rho_\POT)}$. Under additional assumptions, \cite{DreHua98} proved that this is the minimax-optimal convergence rate.

Next, consider the block maxima method. 
For some block size $r\in\{1,\dots, n\}$, decompose the data into $k= \lfloor{n/r}\rfloor$ disjoint blocks of size $r$, that is, let the $i$th block maxima in coordinate $j$ be defined as
\[
M_{r,i,j} = \max\{  X_{t,j}: t=(i-1)r+1, \dots, ir\}.
\]
By \eqref{eq:dombm}, the copula of the i.i.d.\ sample $(M_{r,i,1}, M_{r,i,2}), i=1, \dots, k,$ is approximately given by $C_\infty$, whence $A$ can be estimated by any method of choice, like the Pickands or CFG-estimator. For simplicity, we concentrate on the madogram whose asymptotic behavior can be immediately deduced from the results in \cite{BucSeg14}. Let $\hat U_{i,j} = \hat G_{k,j}(M_{r,i,j})$, where $\hat G_{k,j}$ denotes the $j$th marginal empirical cdf of the sample of block maxima. The madogram-based estimator for $A$ is defined as
\[
\hat A_\BM(t) = \frac{\hat \nu(t)}{1-\hat \nu(t)}, \qquad \hat \nu(t) = \frac{1}k \sum_{i=1}^k \max\{ \hat U_{i,1}^{1/(1-t)}, \hat U_{i,2}^{1/t} \},
\]
which is motivated by the fact that 
\[
A(t) = \frac{\nu(t)}{ 1- \nu(t)}, \qquad \nu(t) = \Exp[\max(U_1^{1/(1-t)}, U_2^{1/t})].
\]
Under Condition~\ref{cond:sobm} it can be shown that the best rate of convergence which can be attained by this estimator is $n^{\rho_\BM/(1-2\rho_\BM)}$. We sketch a proof: first, by some simple calculations, one may write
\begin{align} \label{eq:sc}
\nu(t) = 1- \int_0^1 C_\infty(y^{1-t}, y^t) \diff y, \qquad 
\hat \nu(t) = 1- \int_0^1 \hat C_\BM(y^{1-t}, y^t) \diff y,
\end{align}
where $\hat C_\BM$ denotes the empirical cdf of the sample $(\hat U_{1,1},\hat U_{1,2}), \dots, (\hat U_{k,1},\hat U_{k,2})$, i.e., the empirical copula of the sample of block maxima. Second, by Corollary~3.6 in \cite{BucSeg14}, the process
\[
\sqrt k \{\hat C_\BM(\bm u) - C_r(\bm u) \}
\]
converges weakly to a centred Gaussian process, provided $r\to\infty, r/n\to0$ and some regularity conditions on $C_\infty$ are met. Under Condition~\ref{cond:sobm}, the decomposition
\begin{align*}
\sqrt k \{\hat C_\BM(\bm u) - C_\infty(\bm u) \} 
&= 
\sqrt k \{\hat C_\BM(\bm u) - C_r(\bm u) \}  + \sqrt k \{ C_r(\bm u) - C_\infty(\bm u) \} \\
&=
\sqrt k  \{\hat C_\BM(\bm u) - C_r(\bm u) \}  + \sqrt k \alpha_\BM(r) \{ S_\BM(\bm u) + o(1)\}
\end{align*}
then shows that we obtain a proper (possibly non-centred) limit process if we choose $r$ such that  $\sqrt k \alpha_\BM(r) $ is converging. In the typical case $\alpha_\BM(t) = c t^{\rho_\BM}$, choosing $r$ proportional to $n^{1/(1-2\rho_\BM)}$ leads to the optimal convergence rate $n^{\rho_\BM/(1-2\rho_\BM)}$. By standard arguments based on the continuous mapping theorem and \eqref{eq:sc}, the convergence rate of the empirical copula easily transfers to $\hat A_\BM$.

We illustrate the results in the preceding two paragraphs with three example models from the Archimax family. The parameter $L_0$ is chosen as the stable tail dependence function from the Gumbel copula (also known as the symmetric logistic model), that is,
\[
L_0(x_1, x_2) = (x_1^\theta + x_2^\theta)^{1/\theta}.
\]
Throughout, we fix $\theta=\log(2) / \log(3/2)$ such that $A_0(1/2)=A_0(1/2,1/2)=3/4$ and $\lambda=1/2$. We consider the following three Archimedean generators
\begin{align*} 
\psi_1(x) & = (1-x+x^2/4) \bm 1(x \in [0,1/2]) + (15/16 -3x/4) \bm 1(x \in  (1/2, 5/4]), \nonumber \\
\psi_2(x) & = (1-x+x^2/2)\bm 1(x \in [0,1/2]) + (7/8 -x/2) \bm 1(x \in  (1/2, 7/4]), \\
\psi_3(x) & = (1-x+x^3/6)\bm 1(x \in [0,1/2]) + (23/24 -7x/8) \bm 1(x \in  (1/2, 23/21]), \nonumber
\end{align*}
which are continuously differentiable on their respective supports. The second order expansions of the associated functions $\kappa_\BM$ and $\kappa_\POT$ at $\infty$ are given by

\begin{center}
\renewcommand{\arraystretch}{1.2}
\begin{tabular}{c|cc|cc}
  \hline
  \hline
  Generator &  $\rho_\POT'$ &  $\rho_\BM'$ & Expansion $\kappa_\POT$ & Expansion $\kappa_\BM$   \\ \hline
  $\psi_1$ & $-1$ & $-1$ & $\frac1x-\frac1{4x^2}$  & $\frac1x+\frac{1}{4x^2}+O(x^{-3})$ \\
     $\psi_2$ & $-1$ & $-2$  & $\frac1x-\frac1{2x^2}$  & $\frac1x - \frac{1}{6x^3}+O(x^{-4})$  \\
     $\psi_3$ & $-2$ & $-1$  & $\frac1x-\frac1{6x^3}$  & $\frac1x + \frac{1}{2x^2}+O(x^{-3})$  \\
  \hline \hline
\end{tabular}
\end{center}

\noindent
Note the symmetry between the expansions of $\kappa_\BM(\psi_2)$ and $\kappa_\POT(\psi_3)$. By  Proposition~\ref{prop:archso}, the corresponding second order parameters $\rho_\M=\rho_\M(\psi_j), \M \in \{\BM, \POT\},$ of the associated archimax copulas are given by $\rho=\rho'/\alpha=\rho'$ and thus
\[
\rho_{\BM}(\psi_j) = \begin{cases} -1 &, j=1 \\ -2 &, j=2 \\ -1 &, j=3 \end{cases} 
\qquad \qquad
\rho_{\POT}(\psi_j) = \begin{cases} -1 &, j=1 \\ -1 &, j=2 \\ -2 &, j=3. \end{cases}
\]
Hence, if either the threshold $k$ (POT-estimator) or the block size $r$ (BM-estimator) is chosen at the best attainable rate of convergence as indicated at the beginning of this section, one would expect that estimators behave similarly for $j=1$ (convergence rate $n^{-1/3}$), that the BM-estimator outperforms the POT-estimator for $j=2$ (convergence rate $n^{-2/5}$ vs.\ $n^{-1/3}$), and vice versa for $j=3$. 

Let $\Gamma_\BM=\{1,2,3,\dots, 30\}$ denote a set of block sizes, and let $\Gamma_\POT=\Gamma_\POT(n) = \{k=\lfloor{pn}\rfloor: p \in \{0.01, 0.02, \dots, 0.39, 0.4\}\}$ denote a set of thresholds. In Table~\ref{tab:releff}, we state the relative efficiency
\[
\text{RE} = \frac{\min_{r \in \Gamma_\BM} \MSE(\hat A_{\BM,r}(1/2) ) }{\min_{k \in \Gamma_\POT}  \MSE(\hat A_{\POT,k}(1/2) ) } 
\]
of the \textit{best} (optimal choice of $r$) BM-estimator $\hat A_\BM(1/2)=\hat A_{\BM,r}(1/2)$ to the \textit{best} (optimal choice of $k$) POT-estimator $\hat A_\POT(1/2)=A_{\POT,k}(1/2)$, considered as estimators for $A(1/2)=3/4$, for four different sample sizes $n=1000, 2000, 5000, 10000$. The values are calculated based on $3000$ Monte Carlo repetitions. Simulated samples from the Archimax copulas are generated by the algorithm described in Section 5.2 in \cite{ChaFouGen14}. The results perfectly match the expected behavior: for model $\psi_1$, the relative efficiencies are close to 1 (in fact, they are all slightly above 1), while they are decreasing for  $\psi_2$ and increasing for $\psi_3$.

\begin{table}[!htbp]
\centering
\renewcommand{\arraystretch}{1.0}
\begin{tabular}{r|rrr}
  \hline
  \hline
  Sample size $n$ & $\psi_1$ & $\psi_2$  & $\psi_3$ \\ \hline
1000 & 1.214 & 0.233 & 3.253 \\ 
  2000 & 1.127 & 0.198 & 3.751 \\ 
  5000 & 1.141 & 0.147 & 4.104 \\ 
  10000 & 1.088 & 0.123 & 4.762 \\  
  \hline \hline
\end{tabular}
\medskip
\caption{Relative efficiencies for estimating $A_0(1/2,1/2)$  based on Monte Carlo Simulation. Values below 1 indicate that the BM-estimator is more efficient.}\label{tab:releff}
\end{table}

As a further illustration, Figure~\ref{fig:MSE} depicts variance, squared bias and MSE as a function of the block size~$r$ (BM-estimator) or the threshold parameter $k$ (POT-estimator) for fixed sample size $n=5000$; again based on 3000 Monte Carlo replications. 
The following observation can be made estimator-wise: the variance curves behave similarly for all models, while the squared bias curve is much smaller for the respective model with $\rho_{\M}=-2$ than for the other two models.

\begin{figure}[!htbp]
\begin{center}
\centerline{\includegraphics[width=0.975\textwidth]{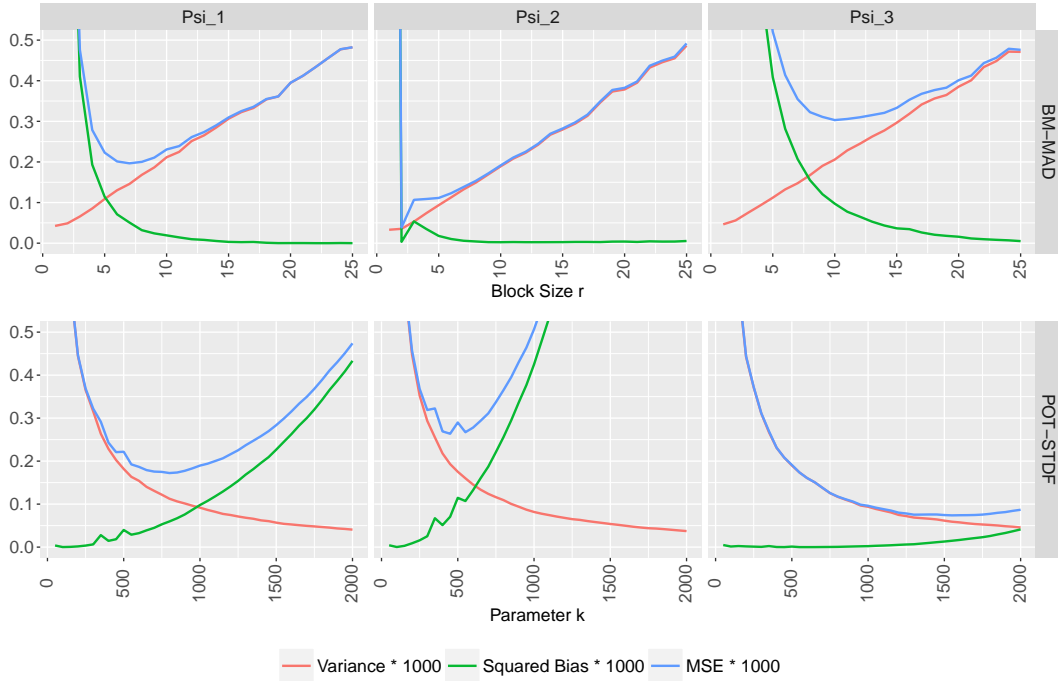}}
\vspace{-.5cm}
\caption{\label{fig:MSE} Variance, Squared Bias and MSE for estimating $A_0(1/2,1/2)$ based on Monte Carlo Simulation (3000 repetitions, Sample Size $n=5000$). }
\end{center}
\end{figure}

\section*{Acknowledgments}

Axel Bücher gratefully acknowledges support by the Collaborative Research Center ``Statistical modeling of nonlinear dynamic processes'' (SFB 823) of the German Research Foundation, Project A7. Stanislav Volgushev gratefully acknowledges support by a discovery grant from NSERC of Canada.

\newpage

\bibliographystyle{chicago}
\bibliography{biblio}
\end{document}